\newcommand\rep[2]{{\color{red}\st{#1}#2}}
\theoremstyle{plain}
\newtheorem{theorem}{Theorem}[section]
\newtheorem{corollary}[theorem]{Corollary}
\newtheorem{lemma}[theorem]{Lemma}
\newtheorem{proposition}[theorem]{Proposition}
\theoremstyle{definition}
\newtheorem{example}[theorem]{Example}
\newtheorem{remark}[theorem]{Remark}
\numberwithin{equation}{section}
\newcommand{\R}{{\mathbb R}}
\newcommand{\N}{{\mathbb N}}
\newcommand{\Om}{\Omega}
\providecommand{\vint}[1]{\mathchoice
          {\mathop{\vrule width 5pt height 3 pt depth -2.5pt
                  \kern -9pt \kern 1pt\intop}\nolimits_{\kern -5pt{#1}}}
          {\mathop{\vrule width 5pt height 3 pt depth -2.6pt
                  \kern -6pt \intop}\nolimits_{\kern -3pt{#1}}}
          {\mathop{\vrule width 5pt height 3 pt depth -2.6pt
                  \kern -6pt \intop}\nolimits_{\kern -3pt{#1}}}
          {\mathop{\vrule width 5pt height 3 pt depth -2.6pt
                  \kern -6pt \intop}\nolimits_{\kern -3pt{#1}}}}
\newcommand{\eps}{\varepsilon}
\newcommand\loc{{\rm loc}}
\newcommand{\BV}{\mathrm{BV}}
\newcommand{\mres}{\!\mathbin{\vrule height 1.6ex depth 0pt width
0.13ex\vrule height 0.13ex depth 0pt width 1.1ex}\!}
\newcommand{\yn}{z}
\newcommand{\ys}{y}
\DeclareMathOperator{\rcapa}{cap}
\DeclareMathOperator{\diam}{diam}
\DeclareMathOperator{\dimens}{dim}
\def\XXint#1#2#3{{\setbox0=\hbox{$#1{#2#3}{\int}$}
\vcenter{\hbox{$#2#3$}}\kern-.5\wd0}}
\begin{document}
\title{A note on the weak* and pointwise convergence
	of $\BV$ functions}
\subjclass[2010]{Primary 26B30, 28A20, 31C40; Secondary 28A78.}
\keywords{functions of bounded variation, weak star convergence, pointwise convergence, variation measure, Cantor part, Hausdorff dimension}
\author{Lisa Beck}
\address{{\color{blue}L. B.:} Institut f\"{u}r Mathematik, Universit\"{a}t Augsburg, Universit\"{a}tsstr. 14, 86159~Augsburg, Germany}
\email{lisa.beck@math.uni-augsburg.de}

\author{Panu Lahti}
\address{{\color{blue}P. L.:} Institut f\"{u}r Mathematik, Universit\"{a}t Augsburg, Universit\"{a}tsstr. 14, 86159~Augsburg, Germany \vspace{-8pt}}
\address{Academy of Mathematics and Systems Science, Chinese Academy of Sciences,
Beijing 100190, PR China}
\email{panulahti@amss.ac.cn}

\maketitle
\vspace{-0.5cm}
\begin{abstract} 
We study pointwise convergence properties of weakly* convergent sequences $\{u_i\}_{i \in \N}$ in $\BV(\R^n)$. We show that, after passage to a suitable subsequence (not relabeled), we have pointwise convergence $u_i^*(x)\to u^*(x)$ of the precise representatives for all $x\in \R^n \setminus E$, where the exceptional set $E \subset \R^n$ has on the one hand Hausdorff dimension at most~$n-1$, and is on the other hand also negligible with respect to the Cantor part of~$|D u|$. Furthermore, we discuss the optimality of these results.
\end{abstract}

\section{Introduction}

Let $N,n\in\N$ and consider a sequence $\{u_i\}_{i \in \N}$ of functions in $\BV(\R^n;\R^N)$. We are interested in studying pointwise convergence properties under different assumptions of convergence on the sequence. In this regard, let us recall that for every function $u \in \BV(\R^n)$ its Lebesgue representative~$\widetilde{u}$ is well-defined outside of the approximate discontinuity set~$S_u$ (which is of Hausdorff dimension at most~$n-1$), while for a finer analysis one works with the precise representative $u^*$ which provides a well-defined extension of~$\widetilde{u}$ to the jump set $J_u \subset S_u$ (and the remaining set $S_u \setminus J_u$ is negligible with respect to the $(n-1)$-dimensional Hausdorff measure~$\mathcal H^{n-1}$), see Section~\ref{sec:preliminaries} for the precise statement. If we assume the strong convergence $u_i\to u$ in $L^1(\R^n;\R^N)$, then  it is well-known that for a (not relabeled) subsequence we have $\widetilde{u}_i(x)\to \widetilde{u}(x) $ for $\mathcal L^n$-almost every $x\in\R^n$. If we even have strong convergence $u_i\to u$ in $\BV(\R^n;\R^N)$, then for a (not relabeled) subsequence we have $u_i^*(x)\to u^*(x)$ for $\mathcal H^{n-1}$-almost every $x\in \R^n$ (which follows e.g. from \cite[Remark 4.1, Lemma 4.2]{LaSh}).

Here we investigate what can be said about pointwise convergence if the sequence $\{u_i\}_{i \in \N}$ is known to converge to~$u$ in a stronger topology than $L^1(\R^n;\R^N)$, but a weaker one than $\BV(\R^n;\R^N)$. Mostly, we are interested in the case of weak* convergence $u_i\to u$ in $\BV(\R^n;\R^N)$. For this purpose, we proceed in two different directions. First, we follow an approach via capacity estimates and prove for a subsequence that pointwise convergence holds outside of an exceptional set $E \subset \R^n$ of Hausdorff dimension at most~$n-1$.

\begin{theorem}\label{thm:Hausdorff main BV theorem}
\label{thm_bounded_BV_cap}
Let $u \in \BV(\R^n)$. Let $\{u_i\}_{i \in \N}$ be a sequence in $\BV(\R^n)$ 
for which $\{|D u_i|(\R^n)\}_{i \in \N}$ is bounded, and
suppose that $\widetilde{u}_i(x) \to \widetilde{u}(x)$ for $\mathcal L^n$-almost
every $x\in \R^n$. Then there exists a set $E \subset \R^n$ such that 
$\dim_{\mathcal H}(E) \leq n-1$ and such that for a subsequence 
\textup{(}not relabeled\textup{)} we have
\[
\widetilde{u}_i(x) \to \widetilde{u}(x) \quad \text{for every } x \in \R^n \setminus E.
\]
\end{theorem}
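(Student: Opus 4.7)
The proof proceeds by subsequence extraction and a density argument at points outside an $(n{-}1)$-dimensional exceptional set. First I would upgrade the hypothesis to local $L^1$ convergence: since $\BV(\R^n) \hookrightarrow L^{n/(n-1)}_{\loc}(\R^n)$ and $|Du_i|(\R^n)$ is bounded, a uniform $L^{n/(n-1)}_{\loc}$-bound on $\{u_i\}$ follows (using $\widetilde u_i \to \widetilde u$ $\mathcal L^n$-almost everywhere together with Fatou to control local averages); equi-integrability and Vitali's theorem then yield $u_i \to u$ in $L^1_{\loc}(\R^n)$ along a subsequence, not relabeled. A further extraction via weak-star compactness produces a finite positive Radon measure $\nu \ge |Du|$ with $|Du_i| \stackrel{\ast}{\rightharpoonup} \nu$.

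Next I would define the exceptional set
\[
E := S_u \cup \bigcup_{i \in \N} S_{u_i} \cup \bigl\{ x \in \R^n : \Theta^{\ast,n-1}(\nu,x) > 0 \bigr\},
\]
where $\Theta^{\ast,n-1}$ denotes the upper $(n{-}1)$-density of $\nu$. Each $S_u$ and $S_{u_i}$ is $\sigma$-finite for $\mathcal H^{n-1}$ by the structure theorem for $\BV$, and the density set is $\sigma$-finite for $\mathcal H^{n-1}$ by the classical estimate $\mathcal H^{n-1}(\{\Theta^{\ast,n-1}(\nu,\cdot) > t\}) \le C \nu(\R^n)/t$. A countable union of such sets has $\dim_{\mathcal H} \le n-1$.

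For $x \notin E$ I would verify $\widetilde u_i(x) \to \widetilde u(x)$ as follows. Given $\varepsilon > 0$, fix a continuity radius $r > 0$ of $\nu$ with $\nu(\overline B(x,r))/r^{n-1} < \varepsilon$ and $|\dashint_{B(x,r)} u - \widetilde u(x)| < \varepsilon$ (the latter being possible since $x \notin S_u$ is a Lebesgue point). Weak-star convergence at continuity radii then yields $|Du_i|(B(x,r))/r^{n-1} < 2\varepsilon$ for $i \ge i_0(r)$; a quantitative $\BV$ Poincar\'e/capacity-type estimate at Lebesgue points of $u_i$ converts this into $|\dashint_{B(x,r)} u_i - \widetilde u_i(x)| \le C\varepsilon$ uniformly for $i \ge i_0$. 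The finitely many indices $i < i_0$ are handled by further shrinkage of $r$, using that $x$ is a Lebesgue point of each individual $u_i$. Finally $L^1_{\loc}$ convergence gives $\dashint_{B(x,r)} u_i \to \dashint_{B(x,r)} u$ for this fixed $r$, and the triangle inequality closes the argument after letting $\varepsilon \to 0$.

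The main obstacle is the uniform quantitative deviation bound in the third step: deducing smallness of $|\dashint_{B(x,r)} u_i - \widetilde u_i(x)|$ from the $(n{-}1)$-density smallness of $|Du_i|$ at scale $r$. A naive dyadic iteration of the $\BV$ Poincar\'e inequality gives only a logarithmically divergent sum over scales. The refined tool is a weak-type capacity inequality of the form $\mathcal H^{n-1}_\infty(\{|\widetilde v| > t\}) \lesssim |Dv|(\R^n)/t$, which transfers density smallness of $\nu$ into pointwise smallness of the representatives $\widetilde u_i$ at points of vanishing $(n{-}1)$-density, at the cost of sacrificing the $\mathcal H^{n-1}$-null portion already absorbed into $E$.
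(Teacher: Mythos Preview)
Your approach is genuinely different from the paper's and, as written, has a real gap precisely where you yourself flag it. The paper does not argue geometrically via the $(n{-}1)$-density of a weak* limit measure $\nu$; instead it localizes by cut-off, uses the interpolation inequality $[v]_{W^{s,1}(\R^n)}\le C(s,n)\|v\|_{L^1}^{1-s}(|Dv|(\R^n))^s$ to upgrade $L^1$-convergence plus bounded variation to strong convergence in $W^{s,1}(\R^n)$ for every $s\in(0,1)$, then interpolates once more (H\"older) into $W^{s',p'}(\R^n)$ with $p'>1$ and $s'p'$ close to $1$, and finally invokes the standard pointwise-convergence theorem for strongly convergent sequences in $W^{s',p'}$ outside a set of zero $(s',p')$-capacity. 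A diagonal argument in $s'p'\nearrow 1$ yields the dimension bound. The capacity machinery absorbs exactly the difficulty you are facing: it controls all dyadic scales simultaneously via the Hardy--Littlewood maximal function, which is bounded on $W^{s',p'}$ when $p'>1$.

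The concrete gap in your argument is the claimed uniform estimate $\bigl|\,\vint{B(x,r)}u_i-\widetilde u_i(x)\bigr|\le C\varepsilon$ for $i\ge i_0(r)$. Knowing $|Du_i|(B(x,r))/r^{n-1}<2\varepsilon$ at the single outer scale $r$ gives no control on $|Du_i|(B(x,\rho))/\rho^{n-1}$ for $\rho\ll r$, and the telescoping sum over dyadic radii genuinely diverges. Your proposed fix via the weak-type inequality $\mathcal H^{n-1}_\infty(\{|\widetilde v|>t\})\lesssim |Dv|(\R^n)/t$ does not help here: applied to $v=u_i-u$ it only yields a \emph{uniform} (not vanishing) bound on $\mathcal H^{n-1}_\infty$ of the bad set, since $|D(u_i-u)|(\R^n)$ is merely bounded, not tending to zero. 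There is no mechanism in your outline that connects the \emph{local} density condition $\Theta^{*,n-1}(\nu,x)=0$ to \emph{pointwise} control of $\widetilde u_i(x)$ uniformly in $i$. Note also that your exceptional set $E$ is $\sigma$-finite with respect to $\mathcal H^{n-1}$; the paper explicitly leaves open whether the exceptional set can always be taken $\sigma$-finite for $\mathcal H^{n-1}$ in dimensions $n\ge 2$ (it proves this only for $n=1$ and for monotone sequences), so a correct version of your argument would in fact be a strengthening of the theorem --- further evidence that the missing step is nontrivial.
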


This result is analogous to the case of weakly convergent sequences in a Sobolev space $W^{1,p}(\R^n)$ with $p \geq 1$, where pointwise convergence of a subsequence holds outside of an exceptional set of Hausdorff dimension at most~$n-p$. The proof of Theorem~\ref{thm_bounded_BV_cap} is carried out in Section~\ref{sec_pointwise_convergence_capacity_method} (along with the proof of the corresponding result for weakly convergent sequences in Sobolev spaces as mentioned above, which is included for comparison) and it is essentially based on capacity estimates for fractional Sobolev spaces and interpolation arguments.

Second, we address pointwise convergence with respect to a diffuse measure $|D^d w|$, which is defined as the sum of the absolutely continuous and the Cantor part of the variation measure~$|Dw|$ for a function $w\in\BV(\R^n)$. We prove for a subsequence (that might depend on the choice of~$w$) that pointwise convergence holds outside of an exceptional set $E \subset \R^n$ of vanishing $|D^d w|$-measure.

\begin{theorem}\label{thm:main BV theorem}
Let $u\in\BV(\R^n)$. Let $\{u_i\}_{i \in \N}$ be a sequence in $\BV(\R^n)$ 
for which $\{|D u_i|(\R^n)\}_{i \in \N}$ is bounded, and
suppose that $u_i^*(x)\to u^*(x)$ for $\mathcal L^n$-almost
every $x\in \R^n$. Let $w\in\BV(\R^n)$.
Then for a subsequence \textup{(}not relabeled\textup{)} we
have 
\[
u_i^*(x)\to u^*(x) \quad  \text{for $|D^d w|$-almost every } x\in \R^n.
\]
\end{theorem}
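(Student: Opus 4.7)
The proposal is to extract the subsequence in three stages: pass to an $L^1_{\loc}$-convergent subsequence, refine so that the derivative measures converge weakly$^*$ to a limit measure $\mu$, and then identify an $\mathcal H^{n-1}$-$\sigma$-finite exceptional set (which is automatically $|D^dw|$-negligible) off which pointwise convergence holds.

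\emph{Setup.} By the Sobolev embedding $\BV(\R^n)\hookrightarrow L^{n/(n-1)}(\R^n)$, the assumption $\sup_i|Du_i|(\R^n)<\infty$ gives a uniform bound on $\|u_i\|_{L^{n/(n-1)}}$. Combined with the a.e.\ pointwise convergence $u_i^*\to u^*$ (which coincides with $u_i\to u$ almost everywhere) and Rellich compactness of $\BV$ in $L^1_{\loc}$, I pass to a subsequence (not relabeled) with $u_i\to u$ strongly in $L^1_{\loc}(\R^n)$. A further extraction yields $|Du_i|\rightharpoonup^*\mu$ in the weak$^*$ sense of Radon measures on $\R^n$, for a finite positive Radon measure $\mu$ with $|Du|\leq\mu$ on open sets by lower semicontinuity.

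\emph{Exceptional set.} Define
\[
E\;:=\;\Bigl\{x\in\R^n:\limsup_{r\to 0^+}\frac{\mu(B(x,r))}{r^{n-1}}>0\Bigr\}\;\cup\;S_u\;\cup\;\bigcup_{i\in\N}S_{u_i}.
\]
The first set is $\mathcal H^{n-1}$-$\sigma$-finite by the classical upper-density estimate: for each $k\in\N$, the set $\{x:\limsup_{r} r^{1-n}\mu(B(x,r))>1/k\}$ has $\mathcal H^{n-1}$-measure at most a dimensional constant times $k\,\mu(\R^n)$. Each set $S_{u_i}$ and $S_u$ is itself $\mathcal H^{n-1}$-$\sigma$-finite. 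Since $|D^aw|\ll\mathcal L^n$ (and any $\mathcal H^{n-1}$-$\sigma$-finite set is $\mathcal L^n$-null) and $|D^cw|$ vanishes on every $\mathcal H^{n-1}$-$\sigma$-finite Borel set by the defining property of the Cantor part, we obtain $|D^dw|(E)=0$.

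\emph{Pointwise convergence off $E$.} For $x\notin E$, we have $r^{1-n}\mu(B(x,r))\to 0$ as $r\to 0$, and both $u^*(x)$ and every $u_i^*(x)$ equal $\lim_{r\to 0}\dashint_{B(x,r)}(\cdot)\,dy$ since $x$ is a Lebesgue point of $u$ and of every $u_i$. I split
\[
|u_i^*(x) - u^*(x)| \;\leq\; \Bigl|u_i^*(x) - \dashint_{B(x,r)}\! u_i\,dy\Bigr| + \Bigl|\dashint_{B(x,r)}\!(u_i - u)\,dy\Bigr| + \Bigl|\dashint_{B(x,r)}\! u\,dy - u^*(x)\Bigr|,
\]
choose $r>0$ small with $\mu(\partial B(x,r))=0$, and then let $i\to\infty$. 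The middle term vanishes by $L^1_{\loc}$ convergence and the rightmost term is made small by the Lebesgue point property of $u$; the first term is to be controlled by the Poincar\'e-Wirtinger inequality on $B(x,r)$, combined with the weak$^*$ convergence $|Du_i|(B(x,r))\to\mu(B(x,r))$ on $\mu$-continuity balls, which transfers the smallness of $r^{1-n}\mu(B(x,r))$ to $r^{1-n}|Du_i|(B(x,r))$ for $i\geq i_0(r)$.

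\emph{Main obstacle.} The delicate point is the first term: the Poincar\'e-Wirtinger inequality bounds $\dashint_{B(x,r)}|u_i-\dashint u_i|\,dy$ by $C_n r^{1-n}|Du_i|(B(x,r))$, but it does not directly bound the pointwise oscillation $|u_i^*(x)-\dashint_{B(x,r)}u_i\,dy|$. The natural dyadic telescoping estimate $|u_i^*(x)-\dashint_{B(x,r)}u_i\,dy|\leq C\sum_{k\geq 0}(r/2^k)^{1-n}|Du_i|(B(x,r/2^k))$ runs into the difficulty that this series need not converge even for $\mu$ itself under the sole hypothesis that the $(n{-}1)$-density vanishes at $x$. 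Overcoming this is the heart of the proof and requires a careful selection of radii together with either an $L^{n/(n-1)}$ refinement of the Poincar\'e-Sobolev inequality for $\BV$ functions or a diagonal argument, so that only finitely many scales (or a summable tail) are needed and all remaining contributions are controlled by the density of $\mu$ at $x$ via the weak$^*$ convergence.
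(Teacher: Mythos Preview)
Your approach is quite different from the paper's, and it has a genuine gap at exactly the point you flag as the ``main obstacle.'' The difficulty you identify is not merely technical---it is fatal for the exceptional set $E$ as you have defined it.

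\textbf{A concrete counterexample to the step ``convergence off $E$.''} Take $n=2$, $u\equiv 0$, and $u_i=\mathbbm 1_{B(0,1/i)}$. Then $|Du_i|(\R^2)=2\pi/i\to 0$, so $\mu\equiv 0$ and the density set in your definition of $E$ is empty; moreover $S_u=\emptyset$ and $S_{u_i}=\partial B(0,1/i)$, so $0\notin E$. Yet $u_i^*(0)=1$ for every $i$ while $u^*(0)=0$. Hence convergence fails at a point outside $E$, and no choice of radii, no $L^{n/(n-1)}$ Poincar\'e--Sobolev refinement, and no diagonal extraction can rescue the first term $|u_i^*(x)-\dashint_{B(x,r)}u_i|$: for every $r>1/i$ this term is essentially $1$, while $r^{1-n}|Du_i|(B(0,r))$ and $r^{1-n}\mu(B(0,r))$ are both arbitrarily small. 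The weak$^*$ limit $\mu$ simply cannot see the small-scale oscillation of an individual $u_i$ that produces the pointwise value $u_i^*(x)$.

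\textbf{Why your strategy overshoots.} Your argument never uses the structure of $w$ beyond the fact that $|D^dw|$ vanishes on $\mathcal H^{n-1}$-$\sigma$-finite sets. If it worked, it would produce a \emph{single} subsequence and a \emph{fixed} $\mathcal H^{n-1}$-$\sigma$-finite exceptional set valid for \emph{every} $w\in\BV(\R^n)$. The paper establishes this only in two special cases ($n=1$, or $\{u_i\}$ decreasing) and otherwise leaves it open; the proof of the theorem itself allows the subsequence to depend on $w$.

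\textbf{What the paper does instead.} The paper exploits the particular structure of $|D^dw|$ via one-dimensional slicing: for each coordinate direction $e_k$ one has $|D^d_kw|=\mathcal L^{n-1}\otimes |D^dw^k_y|$, where the slice measures $|D^dw^k_y|$ charge no singletons. On each one-dimensional slice the fundamental theorem of calculus for $\BV$ gives direct pointwise control $|u_i^*(t)-u^*(t)|\le |u_i^*(t_0)-u^*(t_0)|+|D(u_i-u)|([t_0,t])$ from a nearby ``good'' point $t_0$, which is precisely the estimate that has no higher-dimensional analogue and is what your density argument is missing. A careful measurable selection of the splitting points $t_j^\ell(y)$ and Egorov's theorem then allow integration over $y\in\R^{n-1}$, yielding $L^1$ convergence of $u_i^*$ to $u^*$ with respect to $|D^d_kw|$ on a large set; summing over $k$ and extracting subsequences gives the result.
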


We notice that the assumption of pointwise convergence $\mathcal L^n$-almost everywhere is of course satisfied (for a subsequence) in the particular situation where we have weak* convergence in $\BV(\R^n)$, see also Remark~\ref{remark_assumptions_weak_star_convergence}. Furthermore,
the pointwise convergence with respect to the absolutely continuous part follows of course already from the $\mathcal L^n$-almost everywhere convergence, but the pointwise convergence with respect to the Cantor part~$|D^c w|$ may contain additional information when compared to Theorem~\ref{thm_bounded_BV_cap} since~$|D^c w|$ can be supported on an $(n-1)$-dimensional set. The proof of Theorem~\ref{thm:main BV theorem} is executed in Section~\ref{sec_pointwise_convergence_slicing} and relies heavily on the theory of one-dimensional sections of $\BV$ functions given e.g. in \cite[Section 3.11]{AFP}.

\begin{remark}
The results of Theorem~\ref{thm_bounded_BV_cap} and Theorem~\ref{thm:main BV theorem} hold also in the vector-valued case where the sequence $\{u_i\}_{i \in \N}$ and the limit function~$u$ are
taken in the space $\BV(\R^n;\R^N)$ with $N \in \N$. This is seen easily by considering the component functions.
\end{remark}

In Example \ref{ex:liftings} we discuss a possible application of our results
in the context of \emph{\textup{(}approximable\textup{)} liftings}.
Let us finally observe that the results of Theorem~\ref{thm_bounded_BV_cap} and Theorem~\ref{thm:main BV theorem} are sharp in the sense that the exceptional set~$E$, where pointwise convergence fails, in general does not satisfy $\mathcal H^{n-1}(E) =0$ or $|D w|(E)=0$. In particular, we give examples in Section~\ref{sec_remarks_examples} which demonstrate that the jump set $D^j w$ needs to be excluded in the statement of the pointwise convergence and that also the passage to a subsequence is in general necessary. We further discuss
some aspects of the possible size of the exceptional set~$E$,
and analyze two particular situations in Section~\ref{sec:decreasing}.

\section{Notation and preliminaries}
\label{sec:preliminaries}

\subsection{General notation}

As already mentioned in the introduction, we consider $N,n\in\N$ and we will always work 
in the space~$\R^n$. The matrix space $\R^{N\times n}$ will always be
equipped with the Euclidean norm $|A|\coloneqq (\sum_{i=1}^N \sum_{j=1}^n A_j^i)^{1/2}$,
where~$i$ and~$j$ are the row and column indices, respectively. 
For $a\in\R^N$ and $b\in\R^n$, we define the tensor product
$a\otimes b \coloneqq a b^T\in \R^{N\times n}$, where $a,b$ are considered
as column vectors. We write~$B(x,r)$ for the open ball in~$\R^{n}$ with center~$x$
and radius~$r$, that is, $\{\yn \in \R^n \colon |\yn-x|<r\}$, and we write~$\mathbb S^{n-1}$ 
for the unit sphere in~$\R^n$, that is, $\{\yn \in \R^n \colon |\yn| =1\}$. For a set 
$S \subset \R^n$ we use the notation $S^\mathrm{o}$ to indicate the topological interior. 

We denote the $n$-dimensional Lebesgue measure by $\mathcal L^n$ and use the abbreviation 
$\omega_n = \mathcal L^n(B(0,1))$, and we denote the $s$-di\-men\-sio\-nal Hausdorff measure 
by~$\mathcal H^s$. Given any measure~$\nu$ on~$\R^n$, the restriction of
$\nu$ to a set $S \subset \R^n$ is denoted by $\nu\mres S$, that is,
$\nu\mres S(B) \coloneqq \nu(S\cap B)$ for all sets $B \subset \R^n$. 
The Borel $\sigma$-algebra on a set $S\subset \R^n$ is denoted by $\mathcal B(S)$.

For a function $u$ we write $u_+ \coloneqq \max\{u,0\}$ for its positive part, and 
if it is integrable on some measurable set $S \subset \R^n$ of positive and finite Lebesgue measure, we write $\vint{S} u \,d\mathcal L^n \coloneqq (\mathcal L^n(S))^{-1} \int_S u(x) \,d\mathcal L^n(x)$
for its mean value on~$S$. We further denote by~$\mathbbm{1}_S$ the characteristic function of the set~$S$.

\subsection{Fractional Sobolev spaces and capacity}

Let $\Omega \subset \R^n$ be an open set,  $s \in (0,1)$ and $p \in [1,\infty)$. A function $u \in L^p(\Omega)$ is said to belong to the \emph{fractional Sobolev space} $W^{s,p}(\Omega)$ if $(x,\yn) \mapsto |u(x)-u(\yn)| |x - \yn|^{-n/p-s} \in L^p(\Omega \times \Omega)$. When $W^{s,p}(\Omega)$ is endowed with the norm
\begin{equation*}
 \| u \|_{W^{s,p}(\Omega)} \coloneqq  \Big( \| u \|_{L^{p}(\Omega)}^p + [u]_{W^{s,p}(\Omega)}^p  \Big)^{\frac{1}{p}},
\end{equation*}
where
\begin{equation*}
 [u]_{W^{s,p}(\Omega)}^p \coloneqq  \int_\Omega \int_{\Omega} \frac{|u(x) - u(\yn)|^p}{| x -\yn|^{n+sp}} \,d\mathcal L^n(x) \,d\mathcal L^n(\yn),
\end{equation*}
it is a Banach space. The number $s$ can be interpreted as fractional differentiability, and in some sense the fractional Sobolev spaces $W^{s,p}$ are interpolation spaces between the classical Sobolev space $W^{1,p}$ and the Lebesgue space $L^p$. As a matter of fact, many properties known from classical Sobolev spaces extend to fractional Sobolev spaces. We here comment only on a few on them, which are relevant for our paper, and refer for instance to~\cite{ADAMS75,DINPALVAL12} for a detailed discussion. In particular, we have the inclusion $W^{s',p}(\Omega) \subset W^{s,p}(\Omega)$ for every $s' \in (s,1)$, which continues to hold also for the classical Sobolev space with $s'=1$ if $\Omega=\R^n$ of if $\Omega$ is a bounded Lipschitz domain, see e.g. \cite[Proposition 2.1 \& Proposition 2.2]{DINPALVAL12}. Moreover, as in the case of classical Sobolev spaces with integer differentiability and still in the case of bounded Lipschitz domains, there exists a linear, bounded extension operator from $W^{s,p}(\Omega)$ to $W^{s,p}(\R^n)$, see \cite[Theorem 5.4]{DINPALVAL12}. Furthermore, the space $C^\infty_c(\R^n)$ of smooth functions with compact support is dense in $W^{s,p}(\R^n)$, see \cite[Theorem 7.38]{ADAMS75}. 

Associated to the classical ($s=1$) and fractional ($s \in (0,1)$) Sobolev norm, we also introduce the \emph{$(s,p)$-Sobolev capacity} of a set $E \subset \R^n$ as 
\begin{equation*}
 \rcapa_{s,p}(E) \coloneqq  \inf \big\{ \| u \|_{W^{s,p}(\R^n)} \colon u \in W^{s,p}(\R^n) \text{ with } E \subset \{ u \geq 1\}^\mathrm{o} \big\}
\end{equation*}
(which as usual is interpreted as $\infty$ if there doesn't exist any function $u \in W^{s,p}(\R^n)$ with $E \subset \{ u \geq 1\}^\mathrm{o}$). Note that it is easy to verify from its definition that $\rcapa_{s,p}$ is an outer measure on $\R^n$, i.e., it assigns zero measure to the empty set, it is monotone, and it is countably subadditive. Moreover, there holds ${\mathcal L}^n(E) \leq \rcapa_{s,p}(E)$ for all sets $E \subset \R^n$, i.e., the Sobolev capacity measure $\rcapa_{s,p}$ is a finer measure compared to the Lebesgue measure ${\mathcal L}^n$. It is also not difficult to verify that $\rcapa_{s,p}(\{x\})>0$ holds for every single point $x \in \R^n$ whenever $sp>n$, hence, there are no nontrivial sets of vanishing $\rcapa_{s,p}$-measure, which implies that the $(s,p)$-capacity is only useful if $sp \leq n$. Let us further notice that for sets of vanishing $\rcapa_{s,p}$-measure we also have an immediate upper bound on their Hausdorff dimension:

\begin{proposition}
\label{prop_cap_Hausdorff_dim}
Let $s \in (0,1)$ and $p \in (1,\infty)$ with $sp \leq n$. If a set $E\subset \R^n$ satisfies $\rcapa_{s,p}(E)=0$, then we have $\dim_{\mathcal H}(E) \leq n -sp$.  
\end{proposition}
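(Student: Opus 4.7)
My plan is to establish $\dim_{\mathcal H}(E)\le n-sp$ by showing that $\mathcal H^t(E)=0$ for every $t\in(n-sp,n]$; since $\mathcal H^t(E)=0$ is equivalent to the vanishing of the Hausdorff content $\mathcal H^t_\infty(E)$, the matter reduces to proving the following level-set estimate, which I regard as the heart of the argument: for every such $t$ there exists a constant $C=C(n,s,p,t)>0$ such that
\begin{equation*}
\mathcal H^t_\infty\bigl(\{u\ge 1\}^{\mathrm{o}}\bigr)\ \le\ C\,\|u\|_{W^{s,p}(\R^n)}^p \qquad \text{for every } u\in W^{s,p}(\R^n).
\end{equation*}
Once this is in hand, the hypothesis $\rcapa_{s,p}(E)=0$ supplies, for every $k\in\N$, an admissible $u_k\in W^{s,p}(\R^n)$ with $E\subset\{u_k\ge 1\}^{\mathrm{o}}$ and $\|u_k\|_{W^{s,p}}^p\le 2^{-k}$; hence $\mathcal H^t_\infty(E)\le C\cdot 2^{-k}\to 0$, which gives $\mathcal H^t(E)=0$ and, letting $t\downarrow n-sp$, the claim.

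To prove the level-set bound, I would invoke the Bessel potential representation of $W^{s,p}(\R^n)$ (valid because $p\in(1,\infty)$ and $s\in(0,1)$): every $u\in W^{s,p}(\R^n)$ can be written, up to a uniform equivalence of norms, as $u=G_s*f$ with $f\in L^p(\R^n)$ and $\|f\|_{L^p}\lesssim\|u\|_{W^{s,p}}$. Since the Bessel kernel satisfies $G_s(x)\lesssim|x|^{s-n}$ near the origin and decays rapidly at infinity, $|u|$ is controlled pointwise by the Riesz potential $I_s|f|$ modulo a term of order $\|f\|_{L^p}$. The bound then reduces to the classical Frostman/Adams-type estimate
\begin{equation*}
\mathcal H^t_\infty\bigl(\{I_s g \ge \lambda\}\bigr)\ \le\ C\,\lambda^{-p}\|g\|_{L^p}^p \qquad (g\ge 0,\ t>n-sp),
\end{equation*}
which I would prove by a stopping-time argument: at each $x$ in the level set, pick the smallest $r_x>0$ for which $r_x^{s-n/p}\|g\|_{L^p(B(x,r_x))}$ dominates $\lambda$ (a Chebyshev-type inequality applied to the Riesz integral), then apply the $5r$-covering lemma to sum $\sum r_x^t$ against the essentially disjoint $L^p$-masses $\|g\|_{L^p(B(x,r_x))}^p$, using $sp-n+t>0$ to absorb the powers of $r$.

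The main obstacle is precisely this level-set estimate; the Bessel representation I rely on is nontrivial harmonic analysis but is entirely standard, see for instance Adams--Hedberg. A self-contained alternative that avoids Bessel potentials is to argue directly with the Gagliardo seminorm $[u]_{W^{s,p}}$: for each $x\in\{u\ge 1\}^{\mathrm{o}}$, let $r_x>0$ be the largest radius with $\vint_{B(x,r_x)} u\,d\mathcal L^n\ge 1/2$; a fractional Poincar\'e inequality on $B(x,2r_x)$ lower-bounds a local Gagliardo contribution by a constant times $r_x^{n-sp}$, multiplying by $r_x^{t-(n-sp)}$ (which uses $t>n-sp$) and summing over a Vitali subcover then yields the estimate. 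Either route produces the desired constant; the bookkeeping is slightly cleaner in the Bessel formulation, whereas the Gagliardo approach has the virtue of being wholly intrinsic to the norm defined in the paper.
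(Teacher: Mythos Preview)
Your proposal is correct but takes a genuinely different route from the paper. You aim for the direct level-set estimate $\mathcal H^t_\infty(\{u\ge 1\}^{\mathrm o})\le C\|u\|_{W^{s,p}(\R^n)}^p$ for $t>n-sp$ (essentially the comparison of Hausdorff content with Sobolev capacity), and then apply it to each admissible $u_k$ individually. The paper instead uses a short summation trick: from $\rcapa_{s,p}(E)=0$ it extracts functions $u_i\in W^{s,p}(\R^n)$ with $\|u_i\|_{W^{s,p}(\R^n)}\le 2^{-i}$ and $E\subset\{u_i\ge 1\}^{\mathrm o}$, sets $v\coloneqq\sum_{i\in\N} u_i\in W^{s,p}(\R^n)$, observes that $E$ is then contained in the set of points $x$ where $\limsup_{r\to 0}\vint{B(x,r)} v\,d\mathcal L^n=\infty$, and simply quotes the fine-property fact (from \cite{BEC16}) that this divergence set has Hausdorff dimension at most $n-sp$. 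Your approach is more self-contained and makes the Vitali-covering mechanism explicit, at the price of invoking either the Bessel-potential identification of $W^{s,p}$ or a fractional Poincar\'e inequality; the paper's argument is a two-line reduction to an existing result on the size of the non-Lebesgue set of a fractional Sobolev function---a result which is itself proved by essentially the same covering ideas you sketch.
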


\begin{proof}
We essentially follow the first part of the proof of \cite[Section 4.7, Theorem 4]{EvGa}). Since by assumption $\rcapa_{s,p}(E)=0$ holds, we find a sequence of functions $\{u_i\}_{i \in \N}$ in $W^{s,p}(\R^n)$ such that 
\[
\| u_i \|_{W^{s,p}(\R^n)} \leq 2^{-i} \quad \text{and} \quad E \subset \{ u_i \geq 1\}^\mathrm{o}
\]
are satisfied for each $i \in \N$. The first condition ensures that $v \coloneqq  \sum_{i \in \N} u_i$ defines a function in $W^{s,p}(\R^n)$, while the second condition implies
\[
 E \subset \bigg\{ x \in \R^n \colon \limsup_{r \to \infty} \vint{B(x,r)} v(\yn) \,d\mathcal L^n(\yn) = \infty \bigg\}.  
\] 
Since in view of~\cite[Proposition 1.76]{BEC16} the set on the right-hand side has Hausdorff dimension at most $n-sp$, the claim $\dim_{\mathcal H}(E) \leq n -sp$ is established.
\end{proof}

\subsection{Radon measures}
Let $\Om\subset\R^n$ be an open set and $\ell \in\N$. We denote by $C_c(\Om;\R^{\ell})$ 
the space of continuous $\R^{\ell}$-valued functions with compact
support in $\Om$ and by $C_0(\Om;\R^{\ell})$ its completion with
respect to the $\Vert \cdot\Vert_{\infty}$-norm.
We further denote by $\mathcal M(\Om;\R^{\ell})$ the Banach space of vector-valued
Radon measures, equipped with the
\emph{total variation norm} $|\mu|(\Om)<\infty$, which is defined
relative to the Euclidean norm on $\R^{\ell}$. By the Riesz
representation theorem, $\mathcal M(\Om;\R^{\ell})$ is the dual space of $C_0(\Om;\R^{\ell})$, with the duality pairing $\langle\phi,\mu\rangle\coloneqq \int_{\Om} \phi\cdot d\mu\coloneqq \sum_{j=1}^{\ell} 
\int_{\Om} \phi_j \,d\mu_j$.
Thus, weak* convergence $\mu_i\overset{*}{\rightharpoondown}\mu$
in $\mathcal M(\Om;\R^{\ell})$ means $\langle\phi,\mu_i\rangle\to \langle\phi,\mu\rangle$ for all $\phi\in C_0(\Om;\R^{\ell})$.
We further denote the set of positive measures by $\mathcal M^+(\Om)$.

For a vector-valued Radon measure $\gamma\in\mathcal M(\Om;\R^{\ell})$
and a positive Radon measure $\mu\in\mathcal M^+(\Om)$, we
can write the \emph{Lebesgue--Radon--Nikod\'ym decomposition}
\[
\gamma=\gamma^a+\gamma^s=\frac{d\gamma}{d\mu}\,d\mu+\gamma^s
\]
of $\gamma$ with respect to $\mu$,
where $\frac{d\gamma}{d\mu}\in L^1(\Om,\mu;\R^{\ell})$.

For open sets $E\subset \R^{n-m}$, $F\subset \R^m$ and $m \in \{1,\ldots,n-1\}$, a \emph{parametrized measure} $(\nu_{\ys})_{\ys \in E}$ is a mapping from $E$ to the set $\mathcal M(F;\R^{\ell})$ of vector-valued Radon measures on $F$. It is said to be \emph{weakly* $\mu$-measurable}, for $\mu\in\mathcal M^+(E)$, if $\ys\mapsto \nu_{\ys}(B)$ is $\mu$-measurable for all Borel sets $B\in\mathcal{B}(F)$ (it suffices to check this for open subsets). Equivalently, $(\nu_{\ys})_{\ys\in E}$ is weakly* $\mu$-measurable if the function $\ys \mapsto \int_F f(\ys,t)\,d\nu_x(t)$ is $\mu$-measurable for every bounded
$\mathcal B_\mu(E) \times \mathcal B(F)$-measurable function $f\colon E\times F\to \R$ (see \cite[Proposition 2.26]{AFP}), where $\mathcal B_\mu(E)$ denotes the $\mu$-completion of $\mathcal B(E)$. Suppose that we additionally have
\[
\int_{E}|\nu_{\ys}|(F)\,d\mu(\ys)<\infty.
\]
In that case we denote by $\mu \otimes \nu_\ys$ the generalized product measure defined by
\[
\mu\otimes\nu_\ys(A)\coloneqq \int_E \left(\int_F \mathbbm{1}_A(\ys,t)\,d\nu_\ys(t)\right)\,d\mu(\ys)
\]
for any $A\in \mathcal{B}(E\times F)$. Then the integration formula
\[
\int_{E\times F}f(\ys,t)\,d(\mu\otimes \nu_\ys)(\ys,t)=\int_E\left(\int_F f(\ys,t)\,d\nu_\ys(t)\right)\,d\mu(\ys)
\]
holds for every bounded Borel function $f\colon E\times F\to\R$ and,
in the case $\ell=1$ and $\nu_\ys\ge 0$, if $f$ is a positive Borel function.

\subsection{Functions of bounded variation}

Let $\Omega\subset \R^n$ be an open set. A function
$u\in L^1(\Omega;\R^N)$ is said to belong to the space $\BV(\Omega;\R^N)$ of 
\emph{functions of bounded variation} if its distributional derivative
is a finite $\R^{N\times n}$-valued Radon measure. This means that
there exists a (unique) measure $Du\in \mathcal M(\R^n;\R^{N\times n})$
such that for all functions $\psi\in C_c^1(\Omega)$, the integration-by-parts formula
\[
\int_{\Omega}\frac{\partial\psi}{\partial x_k}u^j\,d\mathcal L^n=-\int_{\Omega}\psi\,dDu_k^j, \quad j=1\,\ldots N,\ \ k=1,\ldots,n
\]
holds. The space $\BV(\Omega;\R^N)$ is a Banach space endowed with the norm
\[
\Vert u\Vert_{\BV(\Omega;\R^N)}\coloneqq \Vert u\Vert_{L^1(\Omega;\R^N)}+|Du|(\Omega).
\]

The theory of $\BV$ functions presented here can be found in~\cite{AFP} 
(see also~\cite{EvGa,Fed,Zie}), and we will give specific references only for a few key 
results relevant for our paper. 

Let us first note that for a scalar-valued function $u \in \BV(\Omega)$ the total variation of $Du$
can be obtained by integration over the super-level sets $S_t\coloneqq \{x\in\Omega \colon u(x)>t\}$, $t\in\R$, 
via the coarea formula (see \cite[Theorem 3.40]{AFP}) as
\begin{equation}
\label{eq_coarea}
|Du|(\Omega)=\int_{-\infty}^{\infty}|D\mathbbm{1}_{S_t}|(\Omega)\,dt.
\end{equation}

We now recall different notions of convergence for sequences in $\BV(\Omega;\R^N)$.
We say that a sequence of functions $\{u_i\}_{i \in \N}$ in 
$\BV(\Omega;\R^N)$ \emph{converges weakly*} to $u\in\BV(\Omega;\R^N)$, denoted by
$u_i\overset{*}{\rightharpoondown} u$ in $\BV(\Omega;\R^N)$, if $u_i\to u$ strongly in 
$L^1(\Omega;\R^N)$ and $Du_j\overset{*}{\rightharpoondown}Du$ in $\mathcal M(\Omega;\R^{N\times n})$.
Note that every weakly* convergent sequence $\{u_i\}_{i \in \N}$ in $\BV(\Omega;\R^N)$ is norm-bounded
by the Banach--Steinhaus theorem. Conversely, every norm-bounded sequence $\{u_i\}_{i \in \N}$ in 
$\BV(\Omega;\R^N)$ with strong convergence $u_i\to u$ in $L^1(\Omega;\R^N)$ satisfies 
$u_i\overset{*}{\rightharpoondown} u$ in $\BV(\Omega;\R^N)$, see \cite[Proposition 3.13]{AFP}. Moreover, 
every norm-bounded sequence in $\BV(\Omega;\R^N)$ has a weakly* convergent subsequence if~$\Omega$ 
is sufficiently regular, e.g.~a bounded Lipschitz domain. Moreover, we say that a sequence of functions $\{u_i\}_{i \in \N}$ in $\BV(\Omega;\R^N)$ \emph{converges strictly} to $u\in\BV(\Omega;\R^N)$ if $u_i\to u$ strongly in $L^1(\Omega;\R^N)$ and $|Du_i|(\Omega)\to |Du|(\Omega)$.

We have the following simple fact concerning weak* convergence in the $\BV$ class.

\begin{proposition}\label{prop:pointwise to L1 convergence}
Let $u\in\BV(\R^n)$. Let $\{u_i\}_{i \in \N}$ be a sequence in $\BV(\R^n)$ for which 
$\{|D u_i|(\R^n)\}_{i \in \N}$ is bounded, and suppose that $u_i(x)\to u(x)$ 
for $\mathcal L^n$-almost every $x\in \R^n$. Then we have 
$u_i\to u$ in $L^q_{\loc}(\R^n)$ for every $q \in [1,1^*)$, where $1^*$ is the Sobolev conjugate of~$1$ defined as $\tfrac{n}{n-1}$ if $n>1$ and $\infty$ if $n=1$.
\end{proposition}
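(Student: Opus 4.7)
The plan is to localize to a ball $B \subset \R^n$ and show $u_i \to u$ in $L^q(B)$ for every $q \in [1, n/(n-1))$. Since pointwise a.e.\ convergence on $B$ is already given, the strategy is to produce a uniform bound for $\{u_i\}$ in $L^{n/(n-1)}(B)$ and then invoke Vitali's convergence theorem to upgrade pointwise to $L^q$ convergence. Via the Sobolev--Poincar\'e inequality for $\BV$ functions on a ball, such an $L^{n/(n-1)}(B)$ bound in turn reduces to a uniform $L^1(B)$ bound, since the total variations $|Du_i|(B)$ are already controlled by hypothesis.

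The genuine obstacle is precisely this uniform local $L^1$ bound, because the hypotheses constrain only $|Du_i|(\R^n)$ and not any $L^p$ norm of $u_i$ itself. To handle it I would argue by contradiction: suppose $\|u_{i_k}\|_{L^1(B)} \to \infty$ along some subsequence and set $w_k \coloneqq u_{i_k}/\|u_{i_k}\|_{L^1(B)}$, so that $\|w_k\|_{L^1(B)} = 1$ while $|Dw_k|(B) \leq |Du_{i_k}|(\R^n)/\|u_{i_k}\|_{L^1(B)} \to 0$. Thus $\{w_k\}$ is bounded in $\BV(B)$. By the standard compactness theorem for $\BV$ on a bounded Lipschitz domain, a further subsequence converges in $L^1(B)$ to some $w$; lower semicontinuity gives $|Dw|(B) = 0$, so $w$ is constant on the connected set $B$, and $\|w\|_{L^1(B)} = 1$ forces this constant to be nonzero. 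On the other hand, pointwise a.e.\ convergence $u_i \to u$ combined with $\|u_{i_k}\|_{L^1(B)} \to \infty$ forces $w_k \to 0$ a.e.\ on $B$, so the $L^1$ limit must vanish a.e., yielding the desired contradiction.

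With $\sup_i \|u_i\|_{\BV(B)} < \infty$ in hand, the Sobolev--Poincar\'e inequality yields $\sup_i \|u_i\|_{L^{n/(n-1)}(B)} < \infty$, and the same bound holds for $u$. For $q \in [1, n/(n-1))$ and any measurable set $A \subset B$, H\"older's inequality gives
\[
 \int_A |u_i - u|^q \, d\mathcal L^n \leq \|u_i - u\|_{L^{n/(n-1)}(B)}^q \, \mathcal L^n(A)^{1 - q(n-1)/n},
\]
and since the exponent on $\mathcal L^n(A)$ is strictly positive, the family $\{|u_i - u|^q\}_{i \in \N}$ is equi-absolutely continuous on $B$. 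Together with pointwise a.e.\ convergence, Vitali's convergence theorem yields $u_i \to u$ in $L^q(B)$, and since $B$ was an arbitrary ball, this gives convergence in $L^q_\loc(\R^n)$. The only nontrivial ingredient is the rescaling-and-contradiction argument producing the local $L^1$ bound; the remaining steps are routine applications of Sobolev embedding and Vitali's theorem.
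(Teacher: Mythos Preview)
Your proof is correct, but it differs from the paper's in two places. For the uniform local $L^1$ bound, the paper argues directly rather than by contradiction: since $u_i\to u$ a.e., for $i$ large the set $A\subset B(0,R)$ where $|u_i-u|\le 1$ has at least half the measure of the ball, and a Poincar\'e inequality controlling the $L^1$ norm by the variation plus the size of such a ``good set'' (see \cite[Lemma 2.2]{KKLS}) then gives $\int_{B(0,R)}|u_i-u|\,d\mathcal L^n\le C(n)R\,|D(u_i-u)|(B(0,R))+\mathcal L^n(B(0,R))$ in one line. Your normalization-and-compactness argument is equally valid and perhaps more widely known, but it is nonconstructive and requires an extra pass to a subsequence. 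For the upgrade from boundedness in $L^{n/(n-1)}$ to $L^q$ convergence, the paper invokes Rellich--Kondrachov compactness together with the ``every subsequence has a further convergent subsequence'' trick, whereas you use Vitali via equi-integrability; both are routine, and your route avoids the subsequence bookkeeping.
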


\begin{proof}
Fix an arbitrary $R>0$. For sufficiently large $i \in \N$, we have $|u_i-u|\le 1$ in some set 
$A\subset B(0,R)$ with
\[
\frac{\mathcal L^n(A)}{\mathcal L^n(B(0,R))}\ge \frac{1}{2}.
\]
Therefore, by a Poincar\'e inequality (see e.g. \cite[Lemma 2.2]{KKLS}) there holds
\begin{align*}
\int_{B(0,R)}|u_i-u|\,d\mathcal L^n
	&\le \int_{B(0,R)}(|u_i-u|-1)_+\,d\mathcal L^n+\mathcal L^n(B(0,R))\\
	&\le C(n)R|D(u_i-u)|(B(0,R))+\mathcal L^n(B(0,R))\\
	&\le C(n)R(|Du_i|(\R^n)+|Du|(\R^n))+\mathcal L^n(B(0,R)).
\end{align*}
Thus, $\{u_i-u\}_{i \in \N}$ is a bounded sequence in $\BV(B(0,R))$. Let $q \in [1,1^*)$ be arbitrary. By the Sobolev embedding and the Rellich--Kondrachov compactness theorem, we get for the sequence $\{u_i\}_{i \in \N}$ boundedness in $L^{1^*}(B(0,R))$ and, for a subsequence, strong convergence in $L^{q}(B(0,R))$, necessarily to~$0$. Since from every subsequence of $\{u_i\}_{i \in \N}$ we can choose a further subsequence converging to~$u$ in $L^q(B(0,R))$, this is in fact true also for the original sequence. Since $R>0$ was arbitrary, we have shown the convergence  $u_i\to u$ in $L^q_{\loc}(\R^n)$ as claimed.
\end{proof}


Let $u\in L^1_{\rm loc}(\R^n;\R^N)$. We say that $u$ has a Lebesgue point at $x\in\R^n$ if
\[
\lim_{r\to 0} \; \vint{B(x,r)}|u(\yn)-\widetilde{u}(x)|\,d\mathcal L^n(\yn)=0
\]
for some (unique) $\widetilde{u}(x)\in\R^N$. We denote by $S_u$ the set where this condition fails and call it the \emph{approximate discontinuity set}. We note that $S_u$ is a Borel set of vanishing $\mathcal L^{n}$-measure and that $\widetilde{u} \colon \R^n \setminus S_u \to \R^N$ is Borel-measurable, see \cite[Proposition 3.64]{AFP}.

Given $\nu\in \mathbb S^{n-1}$, we denote the upper and lower half-ball with respect to $\nu$ as
\begin{align*}
B_{\nu}^+(x,r)\coloneqq \{\yn \in B(x,r) \colon \langle \yn-x,\nu\rangle>0\},\\
B_{\nu}^-(x,r)\coloneqq \{\yn \in B(x,r) \colon \langle \yn-x,\nu\rangle<0\}.
\end{align*}
We say that $x\in \R^n$ is an approximate jump point of $u$ if there exist
$\nu\in \mathbb S^{n-1}$ and $u^+(x), u^-(x)\in\R^N$ with
$u^+(x) \neq u^-(x)$ (called the one-sided approximate limits) such that
\[
\lim_{r\to 0} \; \vint{B_{\nu}^+(x,r)}|u(\yn)-u^+(x)|\,d\mathcal L^{n}(\yn)=0
\]
and
\[
\lim_{r\to 0} \; \vint{B_{\nu}^-(x,r)}|u(\yn)-u^-(x)|\,d\mathcal L^{n}(\yn)=0.
\]
We denote by~$J_u$ the set of approximate jump points and call it the \emph{approximate jump set}. If $u \in \BV(\Omega;\R^N)$, then $S_u$ is countably $\mathcal H^{n-1}$-rectifiable and $\mathcal H^{n-1}(S_u\setminus J_u)=0$, see \cite[Theorem 3.78]{AFP}. 

For a finer analysis of $\BV$ functions, we can now define the \emph{precise representative} of $u \in \BV(\Omega;\R^N)$ as
\[
u^*(x)\coloneqq 
\begin{cases}
\widetilde{u}(x) & \quad \textrm{if }x\in\R^n\setminus S_u,\\
(u^+(x)+u^-(x))/2 & \quad \textrm{if }x\in J_u,
\end{cases}
\]
which is uniquely determined $\mathcal H^{n-1}$-almost everywhere. We then write the Lebesgue--Radon--Nikod\'ym decomposition of the variation measure of $u$ into the absolutely continuous and singular parts as $Du=D^a u+D^s u$. Furthermore, we define the jump and Cantor parts of $Du$ by
\[
D^j u\coloneqq D^s u\mres J_u,\qquad  D^c u\coloneqq  D^s u\mres (\R^n\setminus S_u).
\]
Since $Du$ vanishes on~$\mathcal H^{n-1}$-negligible sets (see \cite[Lemma 3.76]{AFP}), we obtain with $\mathcal H^{n-1}(S_u\setminus J_u)=0$ the decomposition
\[
Du=D^a u+ D^c u+ D^j u.
\]
Moreover, we call the sum $D^a u+D^c u$ the \emph{diffuse} part of the variation measure and denote it by $D^d u$.

\subsection{One-dimensional sections of $\BV$ functions}\label{subsec:one dimensional sections}

The following notation and results on one-dimensional sections of
$\BV$ functions, as given in \cite[Section 3.11]{AFP}, will be
crucial for us.

In the one-dimensional case $n=1$, we have $J_u=S_u$, $J_u$ is at most countable,
and $Du (\{x\})=0$ for every $x\in\R\setminus J_u$. Moreover, we have
for every $x,\tilde{x}\in \R\setminus J_u$ with $x<\tilde{x}$ that
\begin{equation}\label{eq:fundamental theorem of calculus for BV}
u^*(\tilde{x})-u^*(x)=Du((x,\tilde{x})),
\end{equation}
and for every $x,\tilde{x}\in \R$ with $x<\tilde{x}$ that
\begin{equation}\label{eq:fundamental theorem of calculus for BV 2}
|u^*(\tilde{x})-u^*(x)|\le |Du|([x,\tilde{x}]).
\end{equation}

In $\R^n$, we denote the standard basis vectors by $e_k$, $k=1,\ldots,n$.
For any fixed $k\in\{1,\ldots,n\}$, $\ys\in\R^{n-1}$
and $t \in \R$, we introduce the notation 
\[
\pi_k(\ys,t) \coloneqq (\ys_1,\ldots,
\ys_{k-1},t,\ys_k,\ldots,\ys_{n-1}) \in \R^n.
\]
For a set $A \subset \R^n$ we then denote the slices of~$A$ at $\pi_k(\ys,0)$ in $e_k$-direction by
\[
A^{e_k}_{\ys}\coloneqq \{t\in\R \colon \pi_k(\ys,t) \in A\}.
\]
For $u\in\BV(\R^n;\R^N)$, we denote $u_\ys^k(t)\coloneqq u(\pi_k(\ys,t))$ and record  that $u_\ys^k\in \BV(\R;\R^N)$ 
is satisfied for $\mathcal L^{n-1}$-almost every $\ys\in\R^{n-1}$  (see \cite[Theorem 3.103]{AFP}). 
Denoting $D_k u\coloneqq \langle Du,e_k\rangle$ and $D^d_k u\coloneqq \langle D^d u,e_k\rangle$ 
(the inner product taken row-wise), we further have
\[
D_k u=\mathcal L^{n-1}\otimes D u^k_\ys \quad 
\text{and} \quad D^d_k u=\mathcal L^{n-1}\otimes D^d u^k_\ys
\]
(see \cite[Theorem 3.107 \& Theorem 3.108]{AFP}.
It follows that
\begin{equation}\label{eq:slice representation for total variation}
|D_k u|=\mathcal L^{n-1}\otimes |D u^k_\ys|  \quad 
\text{and} \quad |D_k^d u|=\mathcal L^{n-1}\otimes |D^d u^k_\ys|
\end{equation}
(see \cite[Corollary 2.29]{AFP}). Moreover, for $\mathcal L^{n-1}$-almost every 
$\ys\in\R^{n-1}$ we have
\begin{equation}\label{eq:sections and jump sets}
J_{u_\ys^k}=(J_u)^k_\ys\quad\textrm{and}\quad
(u^*)_\ys^k(t)=(u^k_\ys)^*(t)\ \ \textrm{for every }t\in \R\setminus J_{u^k_\ys},
\end{equation}
(see \cite[Theorem 3.108]{AFP}).

\section{Pointwise convergence w.r.t.~Hausdorff measures}
\label{sec_pointwise_convergence_capacity_method}

In this section, we first consider strongly convergent sequences in fractional Sobolev spaces $W^{s,p}(\R^n)$, with $s \in (0,1)$ and $p \in (1,\infty)$. For these we establish pointwise convergence outside of a set of vanishing $(s,p)$-capacity, by a straightforward adaptation of the proof for classical Sobolev spaces. We then move on to bounded sequences in the classical Sobolev spaces $W^{1,p}(\R^n)$ and in the space $\BV(\R^n)$ of functions of bounded variation and deduce, via compactness and interpolation results, pointwise convergence up to sets of Hausdorff dimension $n-p$ and $n-1$, respectively, which yields in particular the statement of Theorem~\ref{thm_bounded_BV_cap}.

Let us start by recalling that the exceptional set of non-Lebesgue points of a function in $W^{s,p}(\R^n)$ is of vanishing $(s,p)$-capacity (see e.g. \cite[Theorem 6.2]{LUIVAE17}, combined with the argument from \cite[Section 1.7, Corollary 1]{EvGa}), which is the analogous property as known for classical Sobolev functions (see e.g.~\cite{FEDZIE72} or \cite[Section 4.8, Theorem 1]{EvGa}).

\begin{theorem}
\label{thm_Leb_frac_Sobolev}
Let $s \in (0,1)$, $p \in (1,\infty)$ and $u \in W^{s,p}(\R^n)$. Then there exists a set $E \subset \R^n$ such that $\rcapa_{s,p}(E)=0$ and such that for each $x \in \R^n \setminus E$, for some $\widetilde{u}(x) \in \R$, there holds
\[
\lim_{r\to 0} \vint{B(x,r)}|u(\yn)-\widetilde{u}(x)|\,d\mathcal L^n(\yn)=0 .
\]
\end{theorem}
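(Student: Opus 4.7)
The plan is to follow the classical recipe used for $W^{1,p}$: combine density of smooth functions with a capacitary weak-type bound for the Hardy--Littlewood maximal operator, and extract the exceptional set via a Borel--Cantelli argument. First, using the density of $C^\infty_c(\R^n)$ in $W^{s,p}(\R^n)$, I would choose $u_k \in C^\infty_c(\R^n)$ with $\|u - u_k\|_{W^{s,p}(\R^n)} \leq 4^{-k}$ and set $g_k \coloneqq u - u_k$. Since each $u_k$ is continuous, every point of $\R^n$ is trivially a Lebesgue point of $u_k$, so the failure of $u$ to have a Lebesgue value at $x$ is governed by how fast the averages of $|g_k|$ on balls shrinking to $x$ decay.

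The key intermediate estimate is
\[
\rcapa_{s,p}(\{x \in \R^n : Mg(x) > \lambda\}) \leq \lambda^{-1} \|Mg\|_{W^{s,p}(\R^n)} \leq C \lambda^{-1} \|g\|_{W^{s,p}(\R^n)}
\]
for the centered Hardy--Littlewood maximal function $M$, valid for $g \in W^{s,p}(\R^n)$ with $s \in (0,1)$ and $p \in (1,\infty)$. Two inputs are needed: lower semicontinuity of $Mg$ (so the level set is open, and $\lambda^{-1} Mg$ is admissible in the definition of $\rcapa_{s,p}$), and boundedness of $M$ on $W^{s,p}(\R^n)$, which follows by combining the classical $L^p \to L^p$ bound with the pointwise sublinearity $|Mg(x) - Mg(x+h)| \leq M(g - g(\cdot + h))(x)$ and Fubini applied to the Gagliardo seminorm. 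Applying this to $g = g_k$ with $\lambda = 2^{-k}$ gives $\rcapa_{s,p}(\{Mg_k > 2^{-k}\}) \leq C\, 2^{-k}$, which is summable in $k$.

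Setting $E \coloneqq \bigcap_{j \in \N} \bigcup_{k \geq j} \{Mg_k > 2^{-k}\}$ and invoking countable subadditivity of $\rcapa_{s,p}$, I obtain $\rcapa_{s,p}(E) = 0$. For $x \notin E$ we have $Mg_k(x) \leq 2^{-k}$ for all large $k$; continuity of each $u_k$ at $x$ then forces $\{u_k(x)\}$ to be Cauchy, and I define $\widetilde{u}(x)$ as its limit. The triangle inequality
\[
\vint{B(x,r)} |u(\yn) - \widetilde{u}(x)|\,d\mathcal L^n(\yn) \leq \vint{B(x,r)} |u_k(\yn) - u_k(x)|\,d\mathcal L^n(\yn) + Mg_k(x) + |u_k(x) - \widetilde{u}(x)|,
\]
sending first $r \to 0$ (using continuity of $u_k$ at $x$) and then $k \to \infty$, confirms that $x$ is a Lebesgue point of $u$ with value $\widetilde{u}(x)$. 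The main obstacle is the $W^{s,p}$-boundedness of $M$ in the non-integer range, which is the one ingredient that goes beyond a verbatim translation of the classical $W^{1,p}$ proof; it is however implicit in the approach of \cite[Theorem~6.2]{LUIVAE17}, and if one wished to sidestep it entirely, the same Borel--Cantelli scheme could be run using the Riesz potential representation of $W^{s,p}$ functions together with Hedberg's inequality.
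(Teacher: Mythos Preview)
The paper does not give its own proof of this theorem; it merely states it as known, citing \cite[Theorem~6.2]{LUIVAE17} combined with the argument in \cite[Section~1.7, Corollary~1]{EvGa}. Your argument is correct and is precisely the classical scheme those references implement: smooth approximation, the capacitary weak-type inequality for the maximal function (which, with the paper's definition of $\rcapa_{s,p}$ as an infimum of the norm rather than its $p$-th power, reads exactly as you wrote it), and a Borel--Cantelli exhaustion. In fact, the two key ingredients you identify---lower semicontinuity of $Mg$ making the super-level set open, and boundedness of $M$ on $W^{s,p}(\R^n)$---are exactly what the paper invokes from \cite[Lemma~6.4]{LUIVAE17} in the proof of the subsequent lemma, so your approach and the cited one coincide.
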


Next, following the strategy of proof in \cite[Lemma 2.19]{MALZIE97}, we obtain for sufficiently fast convergent sequences in $W^{s,p}(\R^n)$ that also the exceptional set where pointwise convergence fails is of vanishing $(s,p)$-capacity:

\begin{lemma}
Let $s \in (0,1)$, $p \in (1,\infty)$ and $u \in W^{s,p}(\R^n)$. Let $\{u_i\}_{i \in \N}$ be a sequence of functions in $W^{s,p}(\R^n)$ and suppose that $u_i \to u$ strongly in $W^{s,p}(\R^n)$ with
\begin{equation}
\label{eqn_fast_convergence}
 \sum_{i \in \N} 2^{ip} \| u - u_i \|_{W^{s,p}(\R^n)}^p < \infty.
\end{equation}
Then there exists a set $E \subset \R^n$ such that $\rcapa_{s,p}(E)=0$ and such that for each $x \in \R^n \setminus E$ the pointwise limit $\lim_{i \to \infty} \widetilde{u}_i(x)$ of the Lebesgue representatives exists and coincides with $\widetilde{u}(x)$.
\end{lemma}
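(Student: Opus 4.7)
The plan is to carry out a Borel--Cantelli argument for the $(s,p)$-capacity, following the classical strategy of \cite[Lemma 2.19]{MALZIE97}. First, I would apply Theorem~\ref{thm_Leb_frac_Sobolev} to each of the functions $u$ and $u_i$ ($i \in \N$) to obtain an exceptional set $E_0 \subset \R^n$ with $\rcapa_{s,p}(E_0) = 0$ outside of which all Lebesgue representatives $\widetilde{u}(x)$ and $\widetilde{u}_i(x)$ are defined.

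Next, I would introduce the dyadic superlevel sets
\[
A_i \coloneqq \big\{ x \in \R^n \setminus E_0 \colon |\widetilde{u}(x) - \widetilde{u}_i(x)| > 2^{-i} \big\}
\]
and aim for a weak-type capacity estimate of the form
\[
\rcapa_{s,p}(A_i)^p \leq C \cdot 2^{ip} \| u - u_i \|_{W^{s,p}(\R^n)}^p ,
\]
using the admissible test function $v_i \coloneqq 2^i |u - u_i| \in W^{s,p}(\R^n)$, which satisfies $\|v_i\|_{W^{s,p}(\R^n)} \leq 2^i \|u - u_i\|_{W^{s,p}(\R^n)}$ (since taking absolute values does not increase the Gagliardo seminorm) and has $\widetilde{v}_i(x) > 1$ for every $x \in A_i$. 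Countable subadditivity of $E \mapsto \rcapa_{s,p}(E)^p$ (equivalent, up to a $p$-th root, to the standard $(s,p)$-Sobolev capacity) together with the hypothesis~\eqref{eqn_fast_convergence} then yields
\[
\rcapa_{s,p}\bigg( \bigcup_{i \geq j} A_i \bigg)^p \leq \sum_{i \geq j} C \cdot 2^{ip} \|u - u_i\|_{W^{s,p}(\R^n)}^p \longrightarrow 0 \quad \text{as } j \to \infty.
\]
Setting $E \coloneqq E_0 \cup \bigcap_{j \in \N} \bigcup_{i \geq j} A_i$, we obtain $\rcapa_{s,p}(E) = 0$, and for any $x \in \R^n \setminus E$ one has $x \notin A_i$ for all sufficiently large $i$, so that $|\widetilde{u}(x) - \widetilde{u}_i(x)| \leq 2^{-i} \to 0$, giving the desired pointwise convergence.

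The main technical obstacle is the weak-type capacity estimate itself, since the definition of $\rcapa_{s,p}$ requires $A_i$ to be contained in the \emph{topological interior} $\{v \geq 1\}^\mathrm{o}$, whereas $v_i = 2^i|u-u_i|$ is in general not continuous and $A_i$ is defined only via Lebesgue representatives. I would handle this through a standard quasi-continuity argument, in which one approximates $(1+\eps)v_i$ in the $W^{s,p}$-norm by functions from $C^\infty_c(\R^n)$ (dense by \cite[Theorem 7.38]{ADAMS75}) whose continuous superlevel sets $\{\phi \geq 1\}^\mathrm{o}$ contain $A_i$ once the approximation error is small compared to $\eps$, and then lets $\eps \to 0$ to absorb the resulting constant factor.
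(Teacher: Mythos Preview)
Your Borel--Cantelli structure matches the paper's: fix $E_0$ via Theorem~\ref{thm_Leb_frac_Sobolev}, introduce dyadic level sets $A_i$, bound their capacity, and take $E = E_0 \cup \bigcap_{j}\bigcup_{i\ge j} A_i$. The difference is in how the weak-type capacity bound for $A_i$ is obtained. The paper does not use the test function $v_i = 2^i|u-u_i|$ directly; instead it observes $|\widetilde u(x)-\widetilde u_i(x)|\le M(u-u_i)(x)$ and sets $A_i\coloneqq\{x\in\R^n: M(u-u_i)(x)>2^{-i}\}$, where $M$ is the Hardy--Littlewood maximal operator. Because $Mf$ is lower semicontinuous, this $A_i$ is automatically \emph{open}, so the interior requirement in the definition of $\rcapa_{s,p}$ is satisfied for free; the capacity bound then follows from the boundedness of $M$ on $W^{s,p}(\R^n)$, quoted as \cite[Lemma 6.4]{LUIVAE17}. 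The maximal function is precisely the device that circumvents the technicality you flag.

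Your proposed handling of that technicality has a gap as written. You assert that once $\phi\in C_c^\infty(\R^n)$ is close enough to $(1+\eps)v_i$ in $W^{s,p}$-norm, the open set $\{\phi\ge 1\}^{\mathrm o}$ contains $A_i$. This is false: closeness in the $W^{s,p}$-norm gives no pointwise control, so $A_i\setminus\{\phi\ge 1\}$ can be nonempty no matter how small $\|\phi-(1+\eps)v_i\|_{W^{s,p}}$ is. What is true is that this leftover set has small \emph{capacity}, but establishing that already needs a weak-type inequality for a set that is not a priori open---exactly what you are trying to prove. The circularity can be broken (first prove the weak-type bound for open sets, deduce quasi-continuity of $W^{s,p}$ functions, then show the capacity can equivalently be computed with quasi-continuous competitors satisfying $v\ge 1$ on $E$ rather than on a neighbourhood), and this is indeed ``standard'', but it is substantially more than your sketch indicates. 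The paper's maximal-function route is the shorter path.
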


\begin{proof}
In view of Theorem~\ref{thm_Leb_frac_Sobolev}, we find a set $E_0 \subset \R^n$ with $\rcapa_{s,p}(E_0)=0$ such that there hold
\[
 \widetilde{u}_i(x) = \lim_{r \to 0} \; \vint{B(x,r)} u_i(\yn) \,d\mathcal L^n(\yn) \quad \text{and} \quad \widetilde{u}(x) = \lim_{r \to 0} \; \vint{B(x,r)} u(\yn) \,d\mathcal L^n(\yn) 
\]
for all $x \in \R^n \setminus E_0$ and all $i \in \N$.
For these points $x \in \R^n \setminus E_0$ we then observe
\[
 |\widetilde{u}(x) - \widetilde{u}_i(x)| \leq \sup_{r > 0} \; \vint{B(x,r)} |u(\yn) - u_i(\yn)| \,d\mathcal L^n(\yn) = M(u - u_i)(x),   
\]
where $M$ denotes the Hardy--Littlewood maximal operator. Defining sets 
\[
 A_i \coloneqq  \big\{ x \in \R^n \colon M(u-u_i)(x) > 2^{-i} \big\}
\]
we then deduce from \cite[Lemma 6.4]{LUIVAE17} (based essentially on the facts that $A_i$ is an open set and that the Hardy--Littlewood maximal operator is bounded from $W^{s,p}(\R^n)$ into itself) the estimate
\[
 \rcapa_{s,p}(A_i) \leq C(n,p) 2^{ip} \| u - u_i \|_{W^{s,p}(\R^n)}^p.
\]
Setting 
\[
 E \coloneqq  E_0 \cup \bigcap_{j \in \N} \bigcup_{i \geq j} A_i,
\]
we can then verify the assertions of the lemma. First, by the choice of $E_0$ and by assumption~\eqref{eqn_fast_convergence} we have
\begin{align*}
 \rcapa_{s,p}(E) & \leq \rcapa_{s,p}(E_0) + \lim_{j \to \infty} \sum_{i \geq j} \rcapa_{s,p}(A_i) \\
 & \leq C(n,p) \lim_{j \to \infty} \sum_{i \geq j} 2^{ip} \| u - u_i \|_{W^{s,p}(\R^n)}^p = 0.
\end{align*}
Secondly, if $x \in \R^n \setminus E$, then we have $x \notin E_0$ and there exists some $j_0 \in \N$ such that
$x \notin A_i$ for all $i \geq j_0$. Consequently, we have
\[
 M(u-u_i)(x) \leq 2^{-i} \quad \Rightarrow \quad |\widetilde{u}(x) - \widetilde{u}_i(x)| \leq 2^{-i} \qquad \text{for all } i \geq j_0 
\]
and thus the pointwise convergence $\lim_{i \to \infty} \widetilde{u}_i(x) = \widetilde{u}(x)$ holds. This finishes the proof of the lemma.
\end{proof}

As a direct consequence, by passing to a sufficiently fast convergent subsequence, we have the following

\begin{corollary}
\label{cor_strong_convergence_fractional_Sob}
Let $s \in (0,1)$, $p \in (1,\infty)$ and $u \in W^{s,p}(\R^n)$. Let $\{u_i\}_{i \in \N}$ be a sequence of functions in $W^{s,p}(\R^n)$ and suppose that $u_i \to u$ strongly in $W^{s,p}(\R^n)$. Then for a subsequence \textup{(}not relabeled\textup{)} we have pointwise convergence 
\[
\widetilde{u}_i(x) \to \widetilde{u}(x) \quad \text{for $\rcapa_{s,p}$-almost every } x \in \R^n.
\]
\end{corollary}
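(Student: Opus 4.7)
The plan is to reduce Corollary \ref{cor_strong_convergence_fractional_Sob} directly to the preceding lemma via the standard fast-subsequence extraction trick. The preceding lemma requires the quantitative bound \eqref{eqn_fast_convergence}, namely $\sum_i 2^{ip}\|u-u_i\|_{W^{s,p}(\R^n)}^p < \infty$, which is much stronger than mere strong convergence; however, it is a general fact that any strongly convergent sequence admits a subsequence for which such a summable bound holds, with arbitrarily rapid decay of the tails.

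Concretely, first I would use the assumption $u_i \to u$ strongly in $W^{s,p}(\R^n)$ to choose inductively an increasing sequence of indices $i_1 < i_2 < \cdots$ such that
\[
\|u - u_{i_k}\|_{W^{s,p}(\R^n)} \leq 2^{-k(p+1)/p} \quad \text{for every } k \in \N.
\]
This is possible since $\|u-u_i\|_{W^{s,p}(\R^n)}\to 0$, so at each step it suffices to pick $i_k$ large enough. With this choice, the subsequence $\{u_{i_k}\}_{k \in \N}$ satisfies
\[
\sum_{k \in \N} 2^{kp}\|u-u_{i_k}\|_{W^{s,p}(\R^n)}^p \leq \sum_{k \in \N} 2^{-k} < \infty,
\]
so the hypothesis \eqref{eqn_fast_convergence} of the preceding lemma is met (with the role of $i$ played by $k$).

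Second, I would apply the preceding lemma to this subsequence to obtain a set $E\subset \R^n$ with $\rcapa_{s,p}(E)=0$ such that $\widetilde{u}_{i_k}(x) \to \widetilde{u}(x)$ for every $x\in \R^n\setminus E$. Not relabeling the subsequence, this is precisely the asserted pointwise convergence $\rcapa_{s,p}$-almost everywhere, which completes the proof. There is no real obstacle here: the substantive capacity estimate and maximal-function argument were already handled in the preceding lemma, and the fast-subsequence selection is a standard consequence of strong convergence; the only point to watch is that the exponent in the choice of $\|u-u_{i_k}\|_{W^{s,p}(\R^n)}$ is taken large enough to defeat the factor $2^{kp}$, which the exponent $(p+1)/p$ above does.
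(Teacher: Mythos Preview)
Your proposal is correct and matches the paper's approach exactly: the paper simply states that the corollary follows ``by passing to a sufficiently fast convergent subsequence,'' which is precisely the extraction you carry out in detail to verify hypothesis~\eqref{eqn_fast_convergence} of the preceding lemma.
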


With Corollary~\ref{cor_strong_convergence_fractional_Sob} at hand, we can now address the announced pointwise convergence of bounded sequences in classical Sobolev spaces $W^{1,p}(\R^n)$, given for the purpose of comparison, or in the space $\BV(\R^n)$ of functions of bounded variation, as stated in Theorem~\ref{thm_bounded_BV_cap}.

\begin{theorem}
\label{thm_bounded_class_Sob_cap}
Let $p \in (1,n]$ and $u \in W^{1,p}(\R^n)$. Let $\{u_i\}_{i \in \N}$ be a
sequence in $W^{1,p}(\R^n)$ for which
$\{\Vert \nabla u_i\Vert_{L^p(\R^n;\R^n)}\}_{i\in\N}$ is bounded,
and suppose that $\widetilde{u}_i(x) \to \widetilde{u}(x)$ for $\mathcal L^n$-almost every $x\in \R^n$. Then there exists a set $E \subset \R^n$ with $\dim_{\mathcal H}(E) \leq n-p$ and such that for a subsequence \textup{(}not relabeled\textup{)} we have
\[
\widetilde{u}_i(x) \to \widetilde{u}(x) \quad \text{for every } x \in \R^n \setminus E.
\]
\end{theorem}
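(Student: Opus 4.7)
The plan is to upgrade the pointwise a.e.\ convergence to strong convergence of a subsequence in $W^{s,p}_{\loc}(\R^n)$ for every $s \in (0,1)$, apply Corollary~\ref{cor_strong_convergence_fractional_Sob} to obtain pointwise convergence outside a set of vanishing $\rcapa_{s,p}$-capacity, and then diagonalize over $s_k \uparrow 1$ so that Proposition~\ref{prop_cap_Hausdorff_dim} yields the sharp dimension bound $n-p$.

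First I would pass to an $L^p_{\loc}(\R^n)$-convergent subsequence, adapting the proof of Proposition~\ref{prop:pointwise to L1 convergence}: for each fixed $R > 0$, Egoroff's theorem together with a Poincar\'e inequality on a set of large measure where $|u_i - u|\le 1$ and the boundedness of $\{\|\nabla u_i\|_{L^p(\R^n)}\}_{i\in\N}$ imply that $\{u_i - u\}_{i \in \N}$ is bounded in $W^{1,p}(B(0,R))$; the Rellich--Kondrachov compactness theorem then yields $u_i \to u$ in $L^p(B(0,R))$ along a further subsequence, and diagonalization over $R \in \N$ produces a single subsequence (not relabeled) with $u_i \to u$ in $L^p_{\loc}(\R^n)$. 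For fixed $s \in (0,1)$ and $R \in \N$, I then introduce a cut-off $\phi_R \in C^\infty_c(\R^n)$ with $\phi_R \equiv 1$ on $B(0,R)$ and $\supp \phi_R \subset B(0,R+1)$. The Gagliardo--Nirenberg-type interpolation inequality
\[
[v]_{W^{s,p}(\R^n)} \le C(n,p,s)\,\|v\|_{L^p(\R^n)}^{1-s}\,\|\nabla v\|_{L^p(\R^n)}^{s}, \qquad v \in W^{1,p}(\R^n),
\]
applied to $v = \phi_R(u_i - u)$, together with the product rule for $\nabla[\phi_R(u_i-u)]$ and the first step, then gives $\phi_R(u_i - u) \to 0$ in $W^{s,p}(\R^n)$.

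At this stage Corollary~\ref{cor_strong_convergence_fractional_Sob} extracts, for every pair $(R,s)$, a further subsequence along which the Lebesgue representatives of $\phi_R u_i$ converge to that of $\phi_R u$ outside a set of zero $\rcapa_{s,p}$-capacity; since $\phi_R \equiv 1$ on $B(0,R)$ these representatives coincide with those of $u_i$ and $u$ there, so Proposition~\ref{prop_cap_Hausdorff_dim} gives pointwise convergence $\widetilde{u}_i \to \widetilde{u}$ on $B(0,R)$ outside an exceptional set $E_{R,s}$ of Hausdorff dimension at most $n - sp$. A standard diagonal extraction over the countable family $\{(R, s_k) : R,k \in \N\}$ with $s_k \coloneqq 1 - 1/k$ then produces a single subsequence whose exceptional set is contained in
\[
E \coloneqq \bigcup_{R \in \N}\bigcap_{k \in \N} E_{R,s_k}.
\]
Since $\dim_{\mathcal H}\bigl(\bigcap_k E_{R,s_k}\bigr) \le \inf_k(n - s_k p) = n - p$ for each $R$, and countable unions of sets do not raise the Hausdorff dimension, we conclude $\dim_{\mathcal H}(E) \le n-p$. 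The main technical point is precisely this diagonal argument over $s_k$: for any single $s < 1$, the capacity estimate only produces the weaker bound $n - sp > n-p$, so the sharp value $n-p$ is recovered only by intersecting the exceptional sets obtained as $s_k \uparrow 1$.
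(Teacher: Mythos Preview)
Your proposal is correct and follows essentially the same route as the paper: localize via cut-offs, use Proposition~\ref{prop:pointwise to Lp convergence} (which you re-derive inline) to get $L^p_{\loc}$-convergence, apply the interpolation inequality $[v]_{W^{s,p}} \lesssim \|v\|_{L^p}^{1-s}\|\nabla v\|_{L^p}^{s}$ to upgrade to $W^{s,p}$-convergence, invoke Corollary~\ref{cor_strong_convergence_fractional_Sob}, and diagonalize over $s_k \uparrow 1$ so that Proposition~\ref{prop_cap_Hausdorff_dim} gives the bound $n-p$. The only organizational differences are that the paper localizes once at the outset (assuming all $u_i$ are supported in a fixed ball) rather than carrying the cut-off $\phi_R$ through each step, and that Proposition~\ref{prop:pointwise to Lp convergence} actually yields $L^p_{\loc}$-convergence of the full sequence (via the ``every subsequence has a further convergent subsequence'' trick), so your diagonalization over $R$ in the first step is not strictly needed---but none of this affects correctness.
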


Before proving Theorem~\ref{thm_bounded_class_Sob_cap}, let us first notice that, similarly as Proposition~\ref{prop:pointwise to L1 convergence}, we can prove the following

\begin{proposition}\label{prop:pointwise to Lp convergence}
Let $p \in (1,n]$ and $u \in W^{1,p}(\R^n)$. Let $\{u_i\}_{i \in \N}$ be a
sequence in $W^{1,p}(\R^n)$ for which
$\{\Vert \nabla u_i\Vert_{L^p(\R^n;\R^n)}\}_{i\in\N}$ is bounded, and suppose that $u_i(x)\to u(x)$ 
for $\mathcal L^n$-almost every $x\in \R^n$. Then we have $u_i\to u$ in $L^{p^*}_{\loc}(\R^n)$ for every $q \in [1,p^*)$, where $p^*$ is the Sobolev conjugate of~$p$ defined as $\tfrac{np}{n-p}$ if $n>p$ and $\infty$ if $n=p$.
\end{proposition}

\begin{proof}[Proof of Theorem~\ref{thm_bounded_class_Sob_cap}]
Without loss of generality we can assume that $\{u_i\}_{i \in \N}$ is a sequence in $W^{1,p}(\R^n)$ where $u_i$ vanishes
\rep{}{in a neighborhood of $\R^n\setminus B(0,R)$ for some $R>0$ and}
for all $i \in \N$ (otherwise, we multiply by a suitable cut-off function $\eta_R$ with
\rep{}{$\mathbbm{1}_{B(0,R/2)} \leq \eta_R \leq \mathbbm{1}_{B(0,2R/3)}$}
for $R \in \N$, to obtain sequences which still have norm-bounded gradients and which coincide on $B(0,R/2)$ with the original sequence).

We first observe that by Proposition \ref{prop:pointwise to Lp convergence}, we actually
have strong convergence of the sequence $\{u_i\}_{i \in \N}$ in $L^p(\R^n)$. We then note that for every function $v \in W^{1,p}(\R^n)$ and every $s \in (0,1)$ the interpolation inequality 
\begin{equation*}
 [v]_{W^{s,p}(\R^n)}^p \leq \frac{2^{p(1-s)} n \omega_n}{s (1-s)p} \| v \|_{L^p(\R^n)}^{(1-s)p} \| \nabla v \|_{L^p(\R^n;\R^n)}^{sp}
\end{equation*}
is available, see \cite[Corollary 4.2]{BRASAL19}. Therefore, we even have strong convergence of the sequence $\{u_i\}_{i \in \N}$ in $W^{s,p}(\R^n)$, for every fixed $s \in (0,1)$. Thus, in view of Corollary~\ref{cor_strong_convergence_fractional_Sob}, we can select with a diagonal argument a subsequence (not relabeled) such that pointwise convergence $\widetilde{u}_i(x) \to \widetilde{u}(x)$ holds outside of a set $E_\ell \subset \R^n$ with $\rcapa_{1-1/\ell,p}(E_\ell)=0$, for each $\ell \in \N$. Defining the exceptional set as $E \coloneqq \bigcap_{\ell \in \N} E_\ell$, we then have pointwise convergence $\widetilde{u}_i(x) \to \widetilde{u}(x)$ for every $x \in \R^n \setminus E$. Furthermore, we deduce from Proposition~\ref{prop_cap_Hausdorff_dim} that $\mathcal H^d(E) \leq \mathcal H^d(E_\ell) = 0$, for every $d > n-p$ and $\ell > p/(d-n+p)$. This shows that the Hausdorff dimension of~$E$ is at most $n-p$, which completes the proof of the theorem.
\end{proof}

\begin{remark}
Let us note that the statements of Theorem~\ref{thm_Leb_frac_Sobolev}, Corollary~\ref{cor_strong_convergence_fractional_Sob} and Theorem~\ref{thm_bounded_class_Sob_cap} are trivially true in the case $s \in (0,1]$ and $p \in [1,\infty)$ with $sp >n$. By Morrey's embedding into H\"older spaces (see \cite[Theorem 8.2]{DINPALVAL12} for a version with fractional Sobolev spaces), the Lebesgue representative of a $W^{s,p}$-function is already globally H\"older continuous. Thus, every point of a $W^{s,p}$-function is a Lebesgue point, and for every bounded sequence in $W^{s,p}(\R^n)$ pointwise convergence of the Lebesgue representative of a subsequence is true on the full space, as a consequence of the Arzel\`a--Ascoli theorem.  
\end{remark}

\begin{proof}[Proof of Theorem~\ref{thm_bounded_BV_cap}]
Similarly as in the proof of Theorem~\ref{thm_bounded_class_Sob_cap}, we may here assume that $\{u_i\}_{i \in \N}$ is a sequence in $\BV(\R^n)$ where~$u_i$ vanishes in a neighborhood of $\R^n\setminus B(0,R)$ for some $R>0$ and for all $i \in \N$. 

By Proposition \ref{prop:pointwise to L1 convergence}, we get for the sequence
$\{u_i\}_{i \in \N}$ strong convergence in $L^{1}(\R^n)$,
and by the Sobolev embedding, also
boundedness in $L^{1^*}(\R^n)$.
We then note that for every function $v \in \BV(\R^n)$ and every $s \in (0,1)$ the interpolation 
\begin{equation*}
 [v]_{W^{s,1}(\R^n)} \leq \frac{2^{1-s} n \omega_n}{s (1-s)} \| v \|_{L^1(\R^n)}^{1-s} \big( |D v|(\R^n) \big)^{s},
\end{equation*}
is available, see \cite[Proposition 4.2]{BRALINPAR14}. Therefore, we get strong convergence of $\{u_i\}_{i \in \N}$ in $W^{s,1}(\R^n)$, for every fixed $s \in (0,1)$. We first consider the case $n>1$ and observe that, for any $p'$ chosen such that
\begin{equation*}
 1 < p' < \frac{n}{n-1} \frac{n + s -1}{n+s} <  \frac{n}{n-1} 
\end{equation*}
and $s'$ defined such that
\begin{align*}
 s'p' & = s - (n-1)(n+s)(p'-1) \\
 \Leftrightarrow \qquad p' & = \frac{n}{n-1} \frac{s-s'p'}{n+s} + 1 \frac{n+s'p'}{n+s},
\end{align*}
we have $0< s'p' < s$. We then note that the application of H\"older's inequality shows
\begin{align*}
 [v]_{W^{s',p'}(B(0,R))}^{p'} & = \int_{B(0,R)} \int_{B(0,R)} \frac{ |v(x)-v(\yn)|^{p'}}{|x-\yn|^{n+s'p'}}  \,d\mathcal L^n(x) \,d\mathcal L^n(\yn) \\
   & \leq \bigg( \int_{B(0,R)} \int_{B(0,R)} |v(x)-v(\yn)|^{\frac{n}{n-1}}  \,d\mathcal L^n(x) \,d\mathcal L^n(\yn) \bigg)^{\frac{s-s'p'}{n+s}} \\
   & \qquad \times \bigg( \int_{B(0,R)} \int_{B(0,R)} \frac{ |v(x)-v(\yn)|}{|x-\yn|^{n+s}}  \,d\mathcal L^n(x) \,d\mathcal L^n(\yn) \bigg)^{\frac{n+s'p'}{n+s}} \\
   & \leq \big[ \mathcal L^n(B(0,R)) \big]^{\frac{s-s'p'}{n+s}} \big[ 2 \|v\|_{L^{n/(n-1)}(B(0,R))}\big]^{\frac{n}{n-1} \frac{s-s'p'}{n+s}} [v]_{W^{s,1}(B(0,R))}^{\frac{n+s'p'}{n+s}} \\
   & = \big[ \mathcal L^n(B(0,R)) \big]^{(n-1)(p'-1)} \big[ 2 \|v\|_{L^{n/(n-1)}(B(0,R))} \big]^{n (p'-1)} [v]_{W^{s,1}(B(0,R))}^{n-(n-1)p'} 
\end{align*}
for every function $v \in W^{s,1}(B(0,R)) \cap L^{n/(n-1)}(B(0,R))$. Therefore, we also have strong convergence of the sequence $\{u_i\}_{i \in \N}$ in
$W^{s',p'}(B(0,R))$ and then in fact also in $W^{s',p'}(\R^n)$, see \cite[Lemma 5.1]{DINPALVAL12}, with $s',p'$ chosen as above and fixed $s \in (0,1)$.
By the arbitrariness of $s \in (0,1)$ and with $s'p' \nearrow s$ for $p' \searrow 1$, such choices are in particular possible for sequences $\{s_\ell\}_{\ell \in \N}$, $\{s'_\ell\}_{\ell \in \N}$ and $\{p'_\ell\}_{\ell \in \N}$ satisfying $s_\ell = 1-1/\ell$ and $s_\ell' p_\ell' = 1-2/\ell$
for each $\ell \in \N$. With Corollary~\ref{cor_strong_convergence_fractional_Sob}, we can therefore select with a diagonal argument a subsequence (not relabeled) such that pointwise convergence $\widetilde{u}_i(x) \to \widetilde{u}(x)$ holds outside of a set $E_\ell \subset \R^n$ with $\rcapa_{s_\ell',p_\ell'}(E_\ell)=0$, for each $\ell \in \N$. Setting $E \coloneqq \bigcap_{\ell \in \N} E_\ell$, we then have pointwise convergence $\widetilde{u}_i(x) \to \widetilde{u}(x)$ for every $x \in \R^n \setminus E$, while Proposition~\ref{prop_cap_Hausdorff_dim} implies that $\mathcal H^d(E) \leq \mathcal H^d(E_\ell) = 0$, for every $d > n-1$ and $\ell > 2/(d-n+1)$, which shows that the Hausdorff dimension of~$E$ is at most $n-1$ as claimed.

We finally comment on the case $n=1$. Here it turns out that for any $p' \in (1,\infty)$ and $s' \coloneqq s/p'$ we have 
\begin{equation*}
 [v]_{W^{s',p'}(B(0,R))}^{p'} \leq \big[ 2 \|v\|_{L^{\infty}(B(0,R))} \big]^{\frac{s}{s'}-1}  [v]_{W^{s,1}(B(0,R))}
\end{equation*}
for every function $v \in W^{s,1}(B(0,R)) \cap L^{\infty}(B(0,R))$. As the sequence $\{u_i\}_{i \in \N}$  is even bounded in $L^\infty(\R)$, this allows us to conclude that it converges strongly in $W^{s',p'}(\R)$. Proceeding as for $n>1$ we then find a set $E \subset \R$ with $\dim_{\mathcal H}(E) = 0$ such that the pointwise convergence $\widetilde{u}_i(x) \to \widetilde{u}(x)$ holds for every $x \in \R \setminus E$ for a suitable subsequence.
\end{proof}

\section{Pointwise convergence w.r.t.~diffuse measures}
\label{sec_pointwise_convergence_slicing}

In this section we first prove, via the slicing technique, that each bounded sequence 
in~$\BV(\R^n)$ admits a subsequence converging outside of a $\nu$-vanishing set, where~$\nu$  
is a generalized product measure of diffuse type. We then obtain Theorem~\ref{thm:main BV theorem} as a direct consequence. For convenience of notation and reference to the literature we here prefer to always work with the precise representatives, even though the main arguments exclude jump points of the sequence and the limit function so that the Lebesgue representatives would be suitable as well.

\begin{theorem}\label{thm:convergence theorem}
Let $u\in\BV(\R^n)$. Let $\{u_i\}_{i \in \N}$ be a sequence 
in $\BV(\R^n)$ for which $\{|D u_i|(\R^n)\}_{i \in \N}$ is
bounded, and suppose that $u_i^*(x)\to u^*(x)$ for $\mathcal L^n$-almost
every $x\in \R^n$.
Let $\nu$ be a positive measure of finite mass admitting a representation
\[
\nu=\mathcal L^{n-1}\otimes \nu_\ys
\]
with $\nu_\ys(\{t\})=0$ for all $t\in\R$, for $\mathcal L^{n-1}$-almost
every $\ys\in\R^{n-1}$.
Then for a subsequence \textup{(}not relabeled\textup{)} we
have $u_i^*(x)\to u^*(x)$ for $\nu$-almost every $x\in \R^n$.
\end{theorem}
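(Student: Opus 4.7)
The plan is to reduce the assertion to a one-dimensional question on slices in the direction $e_n$ via Fubini, to establish convergence in $\nu_\ys$-measure on each slice by combining Helly's selection principle with the atomlessness of $\nu_\ys$, and to upgrade to $\nu$-almost everywhere convergence via a final subsequence extraction. For the reduction, Proposition~\ref{prop:pointwise to L1 convergence} gives $u_i\to u$ in $L^1_{\loc}(\R^n)$, and Fubini together with a diagonal extraction over radii $R\in\N$ produces a subsequence (not relabeled) and a Borel set $G\subset\R^{n-1}$ of full $\mathcal L^{n-1}$-measure such that for every $\ys\in G$ the slice convergence $(u_i)_\ys^n\to u_\ys^n$ in $L^1_{\loc}(\R)$ holds. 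Shrinking $G$ further using the slicing identities \eqref{eq:slice representation for total variation}--\eqref{eq:sections and jump sets} and the product representation of $\nu$, we may also assume that for every $\ys\in G$: the functions $u_\ys^n$ and every $(u_i)_\ys^n$ lie in $\BV(\R)$; the measure $\nu_\ys$ is atomless and finite; and the identities $(u^*)_\ys^n=(u_\ys^n)^*$ and $(u_i^*)_\ys^n=((u_i)_\ys^n)^*$ hold off countable (hence $\nu_\ys$-null) subsets of $\R$. Finally, Fatou applied to $\int_{\R^{n-1}}|D(u_i)_\ys^n|(\R)\,d\ys=|D_n u_i|(\R^n)\le|Du_i|(\R^n)\le C$ lets us include $\liminf_i|D(u_i)_\ys^n|(\R)<\infty$ in the defining properties of $G$.

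Fix $\ys\in G$ and write $v_i:=(u_i)_\ys^n$, $v:=u_\ys^n$; the one-dimensional claim is that $v_i^*\to v^*$ in $\nu_\ys$-measure. Given an arbitrary subsequence, the liminf bound allows us to pass to a further subsequence with uniformly bounded total variation, and Helly's selection theorem applied to the Jordan decompositions (diagonalized over compact intervals) produces a sub-subsequence $\{v_{i_j}\}$ converging pointwise everywhere on $\R$ to some $v'\in\BV(\R)$. Since $v_i\to v$ in $L^1_{\loc}(\R)$, we have $v'=v$ $\mathcal L^1$-almost everywhere; moreover $v'$ is classically continuous at every $t\notin J_{v'}=J_v$, so $v'(t)=v^*(t)$ at such points. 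Hence $v_{i_j}^*(t)=v_{i_j}(t)\to v'(t)=v^*(t)$ for every $t$ outside the countable set $J_v\cup\bigcup_j J_{v_{i_j}}$, which is $\nu_\ys$-null by atomlessness. As every subsequence admits a sub-subsequence converging $\nu_\ys$-almost everywhere to $v^*$, the full sequence $\{v_i^*\}$ converges to $v^*$ in $\nu_\ys$-measure.

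To conclude, for each $\epsilon>0$ the map $\ys\mapsto\nu_\ys(\{t\in\R:|((u_i)_\ys^n)^*(t)-(u_\ys^n)^*(t)|>\epsilon\})$ is $\mathcal L^{n-1}$-measurable (by the weak* measurability of $(\nu_\ys)_{\ys\in\R^{n-1}}$), tends to $0$ for every $\ys\in G$ by the previous step, and is dominated by the integrable function $\ys\mapsto\nu_\ys(\R)$ since $\int\nu_\ys(\R)\,d\ys=\nu(\R^n)<\infty$. Dominated convergence together with the integration formula for the generalized product measure $\nu=\mathcal L^{n-1}\otimes\nu_\ys$ then gives $\nu(\{x\in\R^n:|u_i^*(x)-u^*(x)|>\epsilon\})\to 0$, i.e.\ $u_i^*\to u^*$ in $\nu$-measure. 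A standard diagonal extraction with $\epsilon=1/k$, $k\in\N$, produces a subsequence with $u_i^*\to u^*$ $\nu$-almost everywhere.

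The main obstacle is the absence of a uniform bound $\sup_i|D(u_i)_\ys^n|(\R)<\infty$ on individual slices: Fatou only supplies a $\liminf$-control, which forces the argument through the ``every subsequence has a sub-subsequence converging almost everywhere'' principle and through the intermediate notion of convergence in $\nu$-measure, rather than producing a single global Helly extraction at the slice level. The atomlessness of $\nu_\ys$ is essential throughout to ensure that the countable exceptional sets arising from the Helly construction are $\nu_\ys$-negligible.
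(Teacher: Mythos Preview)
Your argument has a genuine gap in the one-dimensional step. You claim that for each fixed $y\in G$ the slice functions $v_i=(u_i)_y^n$ satisfy $v_i^*\to v^*$ in $\nu_y$-measure, arguing via the subsubsequence principle: ``Given an arbitrary subsequence, the liminf bound allows us to pass to a further subsequence with uniformly bounded total variation.'' This inference is incorrect. The hypothesis $\liminf_i|Dv_i|(\R)<\infty$ guarantees only \emph{one} subsequence of bounded total variation; it does not guarantee that \emph{every} subsequence contains one. Along a subsequence with $|Dv_{i_k}|(\R)\to\infty$ Helly's theorem is unavailable, and the subsubsequence principle cannot be invoked.

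Here is an explicit counterexample to the one-dimensional claim under exactly your hypotheses. Let $C=\bigcap_k C_k\subset[0,1]$ be the standard $1/3$-Cantor set and let $\nu_y$ be the Cantor measure (atomless, finite). Set $v_i\coloneqq\mathbbm{1}_{C_i}$ for odd~$i$, $v_i\coloneqq 0$ for even~$i$, and $v\coloneqq 0$. Then $v_i\to 0$ in $L^1(\R)$, each $v_i\in\BV(\R)$, and $\liminf_i|Dv_i|(\R)=0<\infty$, so all your slice assumptions are met. Yet for every odd~$i$ and every $x\in C\subset C_i$ one has $v_i^*(x)\ge 1/2$, whence $\nu_y(\{v_i^*\ge 1/2\})=1$ along the odd indices, and $v_i^*\not\to 0$ in $\nu_y$-measure.

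The paper's proof avoids this obstacle by never attempting qualitative convergence slice by slice. Instead it derives, for \emph{every} $i$ and \emph{every} slice, the quantitative estimate \eqref{eq:convergence in case n=1}, which bounds $\int_\R|v_i^*-v^*|\,d\nu_y$ by a term involving $|D(u_i)_y|(\R)$; only after integrating in~$y$ is the identity $\int_{\R^{n-1}}|D(u_i)_y|(\R)\,dy=|D_n u_i|(\R^n)\le C$ invoked, so no uniform slice variation bound is ever needed. Your route would become viable if you could first extract a subsequence with $\sup_k|D(u_{i_k})_y|(\R)<\infty$ for $\mathcal L^{n-1}$-a.e.~$y$, but deducing this from mere $L^1$-boundedness of $y\mapsto|D(u_i)_y|(\R)$ is itself a nontrivial step that you have not addressed.
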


\begin{proof}
We divide the proof into two steps.
\subsubsection*{Step 1.}
First we consider the one-dimensional case.
Let~$\nu_0$ be a positive measure of finite mass on~$\R$
such that $\nu_0(\{x\})=0$ for every $x\in\R$. 
Take $u_i,u\in \BV(\R)$, $i\in\N$, with $u_i^*(x)\to u^*(x)$ for 
$\mathcal L^1$-almost every $x\in \R$. (Here we do not assume that 
$\{|D u_i|(\R)\}_{i \in \N}$ is
bounded). We consider the set 
\begin{align*}
 M^1 \coloneqq  \Big \{ x \in \R \colon & x \notin \Big(J_u\cup\bigcup_{i\in\N}J_{u_i}\Big)
 \text{ with } u_i^*(x)\to u^*(x) \text{ as } i \to \infty, \\
 & \nu_0((x,\infty)), \nu_0((-\infty,x))\le (2/3)\nu_0(\R) \Big \}
\end{align*}
and observe that $M^1$ is non-empty, as the convergence
$u_i^*(x)\to u^*(x)$  takes place $\mathcal L^1$-almost everywhere,
the approximate jump sets $J_u,J_{u_i}$ are at most countable, and the conditions
$\nu_0((x,\infty)),\nu_0((-\infty,x))\le (2/3)\nu_0(\R)$
are satisfied on a non-empty interval in $\R$ (as $\nu_0$
does not charge singletons). Note that we could also work with a
convenient countable and dense subset $G \subset \R$ as admissible
points for this splitting procedure, in the sense that we could choose
it such that for all $x \in G$ there hold
$x \notin (J_u\cup\bigcup_{i\in\N}J_{u_i})$ and $u_i^*(x)\to u^*(x)$
as $i \to \infty$, and then consider instead of $M^1$ the non-empty
set of points $x \in G$ such that
$\nu_0((x,\infty)), \nu_0((-\infty,x))\le (2/3)\nu_0(\R)$
are satisfied (as done later in Step 2a).

We now pick an arbitrary point $x_1^1 \in M^1$. Next we split each of the intervals $(-\infty,x_1^1)$ and $(x_1^1, \infty)$ into two parts as above, by considering accordingly defined sets $M^2_1 \subset (-\infty,x_1^1)$, $M^2_3 \subset (x_1^1,\infty)$, picking arbitrary points $x^2_1 \in M^2_1$, $x^2_3 \in M^2_3$, and keeping $x^2_2 \coloneqq  x^1_1$.
Continuing like this, we get a monotonously increasing sequence of collections of points
\[
\{x_j^{\ell}\}_{j=1}^{2^{\ell}-1}\subset \R\setminus \Big(J_u\cup \bigcup_{i\in\N}J_{u_i}\Big),\quad \ell\in\N,
\]
with
(denote $x^{\ell}_0=-\infty$ and $x^{\ell}_{2^{\ell}}=\infty$)
\begin{equation}\label{eq:decay of diffuse measure}
\max_{j\in\{0,\ldots,2^{\ell}-1\}} \nu_0((x^{\ell}_j, x^{\ell}_{j+1}))
\le (2/3)^{\ell}\nu_0(\R)
\end{equation}
and $u_i^*(x^{\ell}_j)\to u^*(x^{\ell}_j)$ as $i\to\infty$, for each $\ell\in\N$ and $j=1,\ldots,2^{\ell}-1$.

Denote
\[
\alpha^\ell_i \coloneqq \max_{j\in\{0,\ldots,2^{\ell}-1\}} |u^*_i(x^{\ell}_j)-u^*(x^{\ell}_j)|
\]
so that for any fixed $\ell\in \N$, we have $\alpha^\ell_i \to 0$ as $i\to\infty$.
Note that since necessarily $u^*(x)\to 0$ and $u_i^*(x)\to 0$ as $x\to -\infty$,
we interpret $|u^*_i(x^{\ell}_0)-u^*(x^{\ell}_0)|=0$, where $x^{\ell}_0=-\infty$.
We have for every $\ell\in\N$
\begin{equation}\label{eq:convergence in case n=1}
\begin{split}
&\int_{\R}|u_i^*-u^*|\,d \nu_0
= \sum_{j=0}^{2^{\ell}-1}\int_{x_j^{\ell}}^{x_{j+1}^{\ell}}|u_i^*-u^*|\,d \nu_0\\
&\ \le \sum_{j=0}^{2^{\ell}-1}\nu_0((x_j^{\ell},x_{j+1}^{\ell}))\left(|u_i^*(x^{\ell}_j)- u^*(x^{\ell}_j)|+|D (u_i-u)|((x^{\ell}_j, x^{\ell}_{j+1}))\right)\quad\textrm{by }
\eqref{eq:fundamental theorem of calculus for BV}\\
&\ \le\sum_{j=0}^{2^{\ell}-1}\nu_0((x_j^{\ell},x_{j+1}^{\ell}))\left( \alpha^\ell_i +|D (u_i-u)|((x^{\ell}_j, x^{\ell}_{j+1}))\right)\\
&\ \le \sum_{j=0}^{2^{\ell}-1}\nu_0((x^{\ell}_j, x^{\ell}_{j+1})) \alpha^\ell_i  +(2/3)^{\ell}
\nu_0(\R)\sum_{j=0}^{2^{\ell}-1}|D (u_i-u)|((x^{\ell}_j, x^{\ell}_{j+1}))\quad\textrm{by }
\eqref{eq:decay of diffuse measure}\\
&\ \le \nu_0(\R) \alpha^\ell_i +(2/3)^{\ell} 
\nu_0(\R)\left(|D u_i|(\R)+|D u|(\R)\right).
\end{split}
\end{equation}

We will use this estimate to prove the general case.

\subsubsection*{Step 2.}

Now we consider the general case. Let $\{u_i\}_{i \in \N},u$ be as given 
in the statement of the theorem.

For $\mathcal L^{n-1}$-almost every $\ys\in \R^{n-1}$ we have that
$u_\ys^n, (u_i)_\ys^n\in \BV(\R)$ for $i \in \N$ --- recall the notation 
from Section~\ref{subsec:one dimensional sections}. For simplicity, we will 
discard the superscript~$n$ and write simply $u_\ys, (u_i)_\ys$.

We have $u_i^*(x)\to u^*(x)$ for $\mathcal L^n$-almost every $x\in\R^n$, implying 
that for $\mathcal L^{n-1}$-almost every $\ys\in \R^{n-1}$, $u_i^*(\ys,t)\to u^*(\ys,t)$ 
for almost every $t\in\R$. In view of~\eqref{eq:sections and jump sets} this implies 
that for $\mathcal L^{n-1}$-almost every $\ys\in \R^{n-1}$, $((u_i)_\ys)^*(t)\to (u_\ys)^*(t)$ 
for almost every $t\in\R$. Thus, for $\mathcal L^{n-1}$-almost every $\ys\in\R^{n-1}$, the functions 
$(u_i)_\ys$, $u_\ys$ satisfy the assumptions for the application of Step 1.

\subsubsection*{Step 2a.}

We next reason that the collection of points $\{t_j^{\ell}(\ys)\}$ selected in Step~1 can be chosen 
to be $\mathcal L^{n-1}$-measurable with respect to $\ys \in \R^{n-1}$. 
For this purpose we observe that the set
\[
 \Big\{ x \in \R^n \colon x \notin \Big(J_u\cup\bigcup_{i\in\N}J_{u_i}\Big)
 \text{ with } u_i^*(x)\to u^*(x) \text{ as } i \to \infty \Big\}
\]
is Borel and its complement is of vanishing $\mathcal L^{n}$-measure.
Hence, by Fubini, we can select a countable and dense subset 
\[ 
 G = \{g_1,g_2, g_3, \ldots\} \subset \R
\]
of points and an $\mathcal L^{n-1}$-negligible set $B_0 \subset \R^{n-1}$ such that
\[
 \begin{cases}
  (u_i)_\ys, u_\ys \in \BV(\R), \text{ for all } i \in \N, \\
  ((u_i)^*)_\ys(t)=((u_i)_\ys)^*(t), (u^*)_\ys(t)=(u_\ys)^*(t), \text{ for all } i \in \N, \\
  ((u_i)_\ys)^*(t)\to (u_\ys)^*(t) \text{ as } i \to \infty, \\
  (\ys,t) \notin J_u\cup\bigcup_{i\in\N}J_{u_i}\quad\textrm{and so}\quad
  t\notin J_{u_\ys}\cup\bigcup_{i \in \N} J_{(u_i)_\ys}
 \end{cases}
\]
for all $\ys \in \R^{n-1} \setminus B_0$ and $t \in G$.
We next consider the sets
\begin{align*}
 B_j \coloneqq  \Big\{ \ys \in \R^{n-1} \colon & \ys \notin B_i \text{ for all } i \in \{0,1,\ldots,j-1\}, \\
  & \nu_\ys((g_j,\infty)), \nu_\ys((-\infty,g_j))\le (2/3)\nu_\ys(\R) \Big \},
\end{align*}
which are $\mathcal L^{n-1}$-measurable in $\R^{n-1}$ by weak*
$\mathcal L^{n-1}$-measurability of the mapping $\ys \mapsto \nu_\ys$
and disjoint by construction. As~$G$ was chosen dense in~$\R$,
we hence have the decomposition 
\[
 \R^{n-1} = \bigcup_{j\in\N_0} B_j
\]
(as already commented on in Step 1). If we now define a function $t^1_1 \coloneqq  \sum_{j \in \N} g_j \mathbbm{1}_{B_j}$, 
then we immediately observe that, as a step function, it is $\mathcal L^{n-1}$-measurable, and for each 
$\ys \in \R^{n-1} \setminus B_0$ the point $t^1_1(\ys)$ belongs to the set $M^1_\ys$ (defined
just as the set $M^1$ in Step 1). Iterating this splitting procedure, we then arrive at a 
monotonously increasing sequence of collections of points
\[
\{t_j^{\ell}(\ys)\}_{j=1}^{2^{\ell}-1}\subset \R\setminus \Big(J_{u_\ys}\cup \bigcup_{i\in\N}J_{(u_i)_\ys}\Big),\quad \ell\in\N,
\]
which are $\mathcal L^{n-1}$-measurable with respect to the variable~$\ys$. As a consequence, due to the Borel 
measurability of the precise representatives, also the function
\[
\alpha^\ell_i(\ys) \coloneqq \max_{j\in\{0,\ldots,2^{\ell}-1\}} |((u_i)_\ys)^*(t^{\ell}_j(\ys))-(u_\ys)^*(t^{\ell}_j(\ys))|
\]
is $\mathcal L^{n-1}$-measurable with respect to~$\ys$. Let us also note that because of the convergence 
$((u_i)_\ys)^*(t) \to (u_\ys)^*(t)$  as $i \to \infty$, for all $\ys \in \R^{n-1} \setminus B_0$ and $t \in G$, we again 
have $\alpha^\ell_i(\ys)\to 0$ as $i\to\infty$, for fixed $\ell\in\N$ and all $\ys \in \R^{n-1} \setminus B_0$.

\subsubsection*{Step 2b.}

We next apply the estimate~\eqref{eq:convergence in case n=1} from Step 1. This shows that for 
$\mathcal L^{n-1}$-almost every $\ys\in \R^{n-1}$ (those in $\R^{n-1} \setminus B_0$) we have
\begin{equation}\label{eq:estimate for slices}
\begin{split}
&\int_{\R}|((u_i)_\ys)^*(t)-(u_\ys)^*(t)|\,d \nu_\ys(t)\\
&\qquad \qquad \le \nu_\ys(\R) \alpha^\ell_i(\ys)
+(2/3)^{\ell}\nu_\ys(\R)\left(|D (u_i)_\ys|(\R)+|D u_\ys|(\R)\right).
\end{split}
\end{equation}
Fix $\eps>0$. We initially note that by definition of $\nu$,
\[
\nu(\R^n)=\int_{\R^{n-1}}\nu_\ys(\R)\,d\mathcal L^{n-1}(\ys).
\]
Therefore, on the one hand, by choosing a constant $M_\eps > 0$ sufficiently large, we can assume that
\[
\int_{A_0^\eps} \nu_\ys(\R)\,d\mathcal L^{n-1}(\ys) < \eps \quad \text{for} \quad A_0^\eps \coloneqq  \{\ys \in \R^{n-1} \colon \nu_\ys(\R) > M_\eps \},
\]
and on the other hand the weighted measure $\nu_\ys(\R) \, d\mathcal L^{n-1}$ is a finite measure on~$\R^{n-1}$. By Egorov's theorem, applied for
each fixed $\ell\in \N$ to the measure $\nu_\ys(\R) \,d\mathcal L^{n-1}$
and the sequence of measurable functions
$\{\ys \mapsto \alpha^\ell_i(\ys)\}_{i \in \N}$ converging pointwisely to
zero, we can find a measurable set $A_\ell^\eps\subset \R^{n-1}$ with 
\[
 \int_{A_\ell^\eps} \nu_\ys(\R)\,d\mathcal L^{n-1}(\ys) <2^{-\ell}\eps
\]
and a sequence of positive numbers $\{\bar{\alpha}^\ell_i\}_{i \in \N}$ with
$\bar{\alpha}^\ell_i \to 0$ as $i \to \infty$ and such that 
$\alpha^\ell_i(\ys)\le \bar{\alpha}^\ell_i$ 
for all $i\in \N$, for all $\ys\in \R^{n-1}\setminus A_\ell^\eps$. 
Let $A^\eps\coloneqq \bigcup_{\ell \in \N_0} A_\ell^\eps$, with 
\[
 \int_{A^\eps} \nu_\ys(\R)\,d\mathcal L^{n-1}(\ys) < 2\eps.
\]
Employing~\eqref{eq:sections and jump sets},~\eqref{eq:estimate for slices} and finally~\eqref{eq:slice representation for total variation} for $k=n$, we then find
\begin{equation}\label{eq:long calculation}
\begin{split}
& \int_{(\R^{n-1}\setminus A^\eps)\times \R}|u_i^{*}-u^{*}|\,d (\mathcal L^{n-1}\otimes \nu_\ys)\\
&\qquad= \int_{\R^{n-1}\setminus A^\eps}\int_{\R} |u_i^{*}(\ys,t)-u^{*}(\ys,t)|\,d\nu_\ys(t)\,d\mathcal L^{n-1}(\ys)\\
&\qquad=  \int_{\R^{n-1}\setminus A^\eps}\int_{\R} |((u_i)_\ys)^{*}(t)-(u_\ys)^{*}(t)|\,d\nu_\ys(t)\,d\mathcal L^{n-1}(\ys)\\
&\qquad\le 
\int_{\R^{n-1}\setminus A^\eps} \Big[\nu_\ys(\R) \alpha^\ell_i(\ys)
+(2/3)^{\ell}\nu_\ys(\R)\big(|D (u_i)_\ys|(\R)
+|D u_\ys|(\R)\big)\Big]\,d\mathcal L^{n-1}(\ys) \\
&\qquad\le \bar{\alpha}^\ell_i\int_{\R^{n-1}}\nu_\ys(\R)\,d\mathcal L^{n-1}(\ys)\\
&\qquad\qquad+(2/3)^{\ell}\sup_{\ys\in \R^{n-1}\setminus A^\eps}
\nu_\ys(\R)\int_{\R^{n-1}}[|D (u_i)_\ys|(\R)
+|Du_\ys|(\R)]\,d\mathcal L^{n-1}(\ys)\\
&\qquad \leq \bar{\alpha}^\ell_i \nu(\R^n)+(2/3)^{\ell} M_\eps (\left|D_n u_i|(\R^n)+|D_n u|(\R^n)\right).
\end{split}
\end{equation}
Thus, we obtain
\begin{align*}
&\limsup_{i\to\infty}\int_{(\R^{n-1}\setminus A^\eps)\times \R}|u_i^{*}-u^{*}|\,d (\mathcal L^{n-1}\otimes \nu_\ys)\\
&\qquad\qquad\le
(2/3)^{\ell} M_\eps \limsup_{i\to\infty}(\left|D_n u_i|(\R^n)+|D_n u|(\R^n)\right)\\
&\qquad\qquad\le
(2/3)^{\ell} M_\eps \limsup_{i\to\infty}(\left|D u_i|(\R^n)+|D u|(\R^n)\right) \qquad\to 0 \quad \textrm{as } \ell \to\infty,
\end{align*}
since $\{|D u_i|(\R^n)\}_{i \in \N}$ is a bounded sequence. By passing to a subsequence (not relabeled), 
we have $u_i^{*}(x)\to u^{*}(x)$ for $\mathcal L^{n-1}\otimes \nu_\ys$-almost every
$x\in (\R^{n-1}\setminus A^\eps)\times \R$.
We can do this for sets $A^\eps=A^{1/j}$ with 
\[
 \int_{A^{1/j}} \nu_\ys(\R)\,d\mathcal L^{n-1}(\ys) < \frac{2}{j},
\]
for $j\in\N$, and by a diagonal argument we obtain that for any $j\in\N$, $u_i^{*}(x)\to u^{*}(x)$ for
$\mathcal L^{n-1}\otimes \nu_\ys$-almost every $x\in (\R^{n-1}\setminus A^{1/j})\times \R$, that is,
$u_i^{*}(x)\to u^{*}(x)$ for $\mathcal L^{n-1}\otimes \nu_\ys$-almost every $x\in \R^{n}$, i.e.,
$\nu$-almost every $x\in \R^{n}$.
\end{proof}

\begin{proof}[Proof of Theorem \ref{thm:main BV theorem}]
By~\eqref{eq:slice representation for total variation},
we have
\[
|D_n^d w|=\mathcal L^{n-1}\otimes |D^d w^n_\ys|,
\]
where $|D^d w^n_\ys|(\{t\})=0$ for every $t\in\R$, for $\mathcal L^{n-1}$-almost every $\ys\in\R^{n-1}$.
Thus, by Theorem \ref{thm:convergence theorem} we find a subsequence of $\{u_i\}_{i \in \N}$ 
(not relabeled) such that $u_i^{*}(x)\to u^{*}(x)$ for $|D_n^d w|$-almost every $x\in \R^{n}$.
Passing to further subsequences (not relabeled), we then we obtain (after a change of coordinates) for every $k=1,\ldots,n$ that
$u_i^{*}(x)\to u^{*}(x)$ for $|D_k^d w|$-almost every $x\in \R^{n}$. Noting that
\[
|D^d w|\le \sum_{k=1}^n |D^d_k w|,
\]
we hence have shown the pointwise convergence $u_i^{*}(x)\to u^{*}(x)$
for $|D^d w|$-almost every $x\in \R^{n}$.
\end{proof}

\section{Remarks and examples}
\label{sec_remarks_examples}

In this section we give some remarks concerning Theorems~\ref{thm_bounded_BV_cap} 
and~\ref{thm:main BV theorem} and examine their sharpness.

\begin{remark}
\label{remark_assumptions_weak_star_convergence}
If $\{u_i\}_{i \in \N}$ is a sequence in $\BV(\R^n)$ with $u_i\to u$ weakly* 
in $\BV(\R^n)$, then it is also norm-bounded in $\BV(\R^n)$ 
(see e.g. \cite[Proposition 3.13]{AFP}), and of course for a subsequence we 

have $u_i^*(x)\to u^*(x)$ for $\mathcal L^n$-almost every $x\in\R^n$.
Thus, the assumptions of Theorems~\ref{thm:Hausdorff main BV theorem}, \ref{thm:main BV theorem},
and~\ref{thm:convergence theorem} are satisfied.
\end{remark}

In Theorems~\ref{thm_bounded_BV_cap} and~\ref{thm:main BV theorem}, 
pointwise convergence $u_i^{*}\to u^{*}$ is stated outside of an exceptional 
set~$E \subset \R^n$ that satisfies $\dim_{\mathcal H}(E) \leq n-1$ and $|D^d w|(E)=0$,
respectively. Apart from some specific situations, this cannot be improved to 
$\mathcal H^{n-1}(E) =0$ or $|D w|(E)=0$, meaning that we in particular 
need to exclude the jump part $D^j w$.

\begin{example}
Let $w = u \coloneqq \mathbbm{1}_{[0,1]}\in\BV(\R)$ and define
\[
u_i(x)\coloneqq \max\{0, \min\{1,1/4+ix\}\}\mathbbm{1}_{(-\infty,1]}(x),\quad i\in\N.
\]
Then it is easy to see that $\{u_i\}_{i \in \N}$ is a norm-bounded sequence in $\BV(\R)$ 
with $u_i\to u$ weakly* and even strictly in $\BV(\R)$. However, we have $u_i^*(0)\equiv 1/4\not\to 1/2=u^*(0)$. Moreover, $u^+(0)=1$ and $u^-(0)=0$, so $u_i^*(0)$ does not converge
to these either. Here $|D^j u|(\{0\})=\mathcal H^{0}(\{0\})=1$.

On the other hand, for any dimension  $n\in\N$ and in the special case that the
functions $u_i$ are defined as convolutions of a function $u \in \BV(\R^n)$ 
(with standard mollifiers), we have $u_i\to u$ strictly in $\BV(\R^n)$ and $u_i^*(x)\to u^*(x)$ for $\mathcal H^{n-1}$-almost every $x\in\R^n$ and thus $|Du|$-almost every $x\in\R^n$, see
\cite[Theorem 3.9 \& Corollary 3.80]{AFP}.
\end{example}

In Theorems~\ref{thm_bounded_BV_cap} and~\ref{thm:main BV theorem} the pointwise convergence 
occurs outside of an $(n-1)$-dimensional set and a $|D^d w|$-negligible set, respectively,
but it is not clear how large exactly such exceptional sets can be. In this regard, consider the following

\begin{example}
Let $\{q_j\}_{j \in \N}$ be an enumeration of the rational points on the real line, and define
$E^i_{j}\coloneqq (q_j-1/i, q_j+1/i)$ and 
\[
u_i(x)\coloneqq \sum_{j\in \N}2^{-j}\mathbbm{1}_{E^i_{j}}(x),\quad x\in\R,\ i\in\N.
\]
Then clearly $u_i\searrow u:\equiv 0$ $\mathcal L^1$-almost everywhere, and
\[
|Du_i|(\R)\le \sum_{j\in \N}2^{-j}|D \mathbbm{1}_{E^i_{j}}|(\R)
=\sum_{j\in \N}2^{-j+1}= 2.
\]
Thus, $\{u_i\}_{i \in \N}$ is a norm-bounded sequence in $\BV(\R)$ and the assumptions 
of Theorems~\ref{thm_bounded_BV_cap} and~\ref{thm:main BV theorem} are satisfied. 
However, $u^*_i(q_j)\ge 2^{-j}\not\to 0 =u^*(q_j)$ as $i \to \infty$, for every $j\in\N$. 
Thus, the pointwise convergence $u_i\to u$ (for any subsequence) fails in a fairly 
large set, though this set is still $\sigma$-finite with respect to the Hausdorff 
measure $\mathcal H^{n-1}=\mathcal H^{0}$.
It is not clear whether the exceptional can in some cases be larger than this.
In Section~\ref{sec:decreasing} we will show that it is always at most 
$\sigma$-finite with respect to $\mathcal H^{n-1}$ in two special cases: 
when $n=1$ and when $\{u_i\}_{i \in \N}$ is a decreasing sequence.
\end{example}

One can also ask whether it is necessary to pass to a subsequence
in Theorems~\ref{thm_bounded_BV_cap} and~\ref{thm:main BV theorem}; 
the answer is in general yes.

\begin{example}
Let $C=\bigcap _{k \in \N} C_k\subset [0,1]$ be the standard $1/3$-Cantor set,
where $C_0 = [0,1]$ and~$C_{k}$ is obtained iteratively from~$C_{k-1}$ by removing 
the open middle third of each interval, meaning that in the end~$C_k$ consists 
of $2^k$ compact intervals $C^k_{1},\ldots,C^k_{2^k}$, each of length $3^{-k}$. 
Note that $\dim_{\mathcal H}(C) = \log_3(2)$. 
Let $\{E_i\}_{i \in \N}$ be the sequence of sets
\[
C^1_{1},C^1_{2},C^2_{1},C^2_{2},C^2_{3},C^2_{4},C^3_{1},\ldots.
\]
Let $v$ be the Cantor--Vitali function, let
$w=u\coloneqq v \mathbbm 1_{[0,1]}\in \BV(\R)$, and define
\[
u_i\coloneqq u+\mathbbm{1}_{E_i},\quad i\in\N.
\]
Then $u_i\to u$ in $L^1(\R)$ and $\mathcal L^1$-almost everywhere,
and $|Du_i|(\R)=4$ for all $i\in\N$, hence,  $\{u_i\}_{i \in \N}$ is a norm-bounded 
sequence in $\BV(\R)$. However, for all $x\in C$, and thus for all $x$ in the support 
of $|D^c u|$, we have that $x\in E_i$ for infinitely many $i\in\N$. For such $i \in \N$ 
we have $u_i^*(x)\ge u^*(x)+1/2$ and thus $u_i^*(x)$ fails to converge
to $u^*(x)$. Hence, it is necessary to pass to a subsequence in order to obtain 
pointwise convergence outside of $(n-1)$-dimensional or $|D^d u|$-negligible sets.  
\end{example}

Theorems~\ref{thm_bounded_BV_cap} and~\ref{thm:main BV theorem} both involve an
exceptional set~$E$ where the precise representatives 
(for a subsequence) do not converge, but they are different in nature. 
Theorem~\ref{thm_bounded_BV_cap} gives the upper bound $n-1$ on the Hausdorff dimension
of~$E$ and hence neglects $(n-1)$-dimensional sets, while Theorem~\ref{thm:main BV theorem} 
states that~$E$ is in particular $|D^c u|$-vanishing, and here~$D^c u$ can be
supported on an $(n-1)$-dimensional set.

\begin{example}
Let $C=\bigcap _{k \in \N} C_k\subset [0,1]$ be a generalized Cantor set, where $C_0 = [0,1]$ 
and~$C_{k}$ is obtained from~$C_{k-1}$ by removing an open set in the middle of each interval 
of fraction $1-2 \cdot 3^{-2k+1}$, meaning that in the end~$C_k$ consists of $2^k$ compact 
intervals of length $3^{-k^2}$. This generalized Cantor set is uncountable by construction,
exactly as the standard $1/3$-Cantor set. Concerning its Hausdorff dimension, let us note that 
for each fixed $d >0$, the Hausdorff pre-measure of fineness $\delta >0$ satisfies
\begin{align*} 
\mathcal H_\delta^d (C) & = \inf \Big\{ \sum_{j \in \N} (\diam U_j)^d \colon 
C \subset \bigcup_{j \in \N} U_j, \diam U_j < \delta \text{ for all } j \in \N \Big\} \\
  & \leq 2^k [3^{-k^2}]^d  \quad \stackrel{k \to \infty}{\longrightarrow} 0 ,
\end{align*}
by taking the intervals from~$C_k$ as an admissible covering of $C$, for $k^2 > - \log_3 \delta$. This shows, 
for each fixed $d >0$, that $\mathcal H^d (C) = 0$, implying $\dimens_{\mathcal H}(C) = 0$.
Introducing for each $k \in \N$ the piecewise affine and monotone functions
\[
 u_k(x) = 2^{-k} 3^{k^2} \int_0^x \mathbbm{1}_{C_k}(t) \,dt \quad \text{for } x \in [0,1], 
\]
we easily verify that
\[
 \max_{x \in [0,1]} | u_{k+1}(x) - u_k(x)| = 2^{-k-1} ( 1-2 \cdot 3^{-2k-1}) < 2^{-k-1}.
\]
Thus, $\{u_i\}_{i \in \N}$ is a Cauchy sequence in $C([0,1])$ and converges uniformly to a 
continuous, monotone and bounded function $u \in C([0,1])$, the generalized Cantor--Vitali 
function. We observe $u \in \BV(0,1)$, and since $u_k$ is constant on each connected component 
of $[0,1] \setminus C$ (for $k$ sufficiently large), we finally conclude that $Du$ is 
concentrated on the $0$-dimensional set $C$ and that it is purely Cantor, i.e. $Du = D^c u$. 
\end{example}

Note that we have the following consequence of the proof of Theorem~\ref{thm:convergence theorem} 
in the one-dimensional setting.

\begin{proposition}
Let $u\in\BV(\R)$.  Let $\{u_i\}_{i \in \N}$ be a sequence 
in $\BV(\R)$ for which $\{|D u_i|(\R)\}_{i \in \N}$ is
bounded, and suppose that $u_i^*(x)\to u^*(x)$ for $\mathcal L^1$-almost
every $x\in \R^n$. Let $w\in\BV(\R)$. Then we have
\[
\int_{\R}|u_i^*-u^*|\,d|D^d w|\to 0 \quad \text{as } i\to\infty.
\]
\end{proposition}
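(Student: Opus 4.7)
The strategy is to recognize this statement as an essentially immediate consequence of Step~1 in the proof of Theorem~\ref{thm:convergence theorem}, specialized to the measure $\nu_0 \coloneqq |D^d w|$. The key observation is that Step~1 of that proof, in the one-dimensional setting, actually produces $L^1(\nu_0)$-convergence of the \emph{original} sequence (no subsequence extraction is needed there), for any finite positive measure $\nu_0$ on $\R$ that does not charge singletons.

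First I would verify that $|D^d w|$ qualifies as such a measure $\nu_0$. It has finite mass $|D^d w|(\R) \leq |Dw|(\R) < \infty$, and it assigns zero measure to every singleton $\{t\} \subset \R$: its absolutely continuous part $|D^a w|$ vanishes on every $\mathcal L^1$-negligible set, while its Cantor part is by definition $|D^c w| = |D^s w|\mres(\R\setminus S_w)$, which in the one-dimensional case excludes $J_w = S_w$, so that all atoms of $Dw$ are absorbed into the jump part $|D^j w|$ and the diffuse part is therefore non-atomic.

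Next, I would invoke the estimate~\eqref{eq:convergence in case n=1} from Step~1 of the proof of Theorem~\ref{thm:convergence theorem} with $\nu_0 = |D^d w|$. This immediately gives, for every $\ell \in \N$,
\[
\int_{\R}|u_i^*-u^*|\,d|D^d w| \le |D^d w|(\R)\,\alpha^\ell_i + (2/3)^\ell |D^d w|(\R)\bigl(|Du_i|(\R) + |Du|(\R)\bigr),
\]
where $\alpha^\ell_i \to 0$ as $i \to \infty$ for each fixed $\ell \in \N$. Taking $\limsup_{i\to\infty}$ eliminates the first term, and the $\limsup$ of the second term is bounded by $(2/3)^\ell |D^d w|(\R)\,C$ with $C \coloneqq \sup_{i\in\N} |Du_i|(\R) + |Du|(\R) < \infty$ by hypothesis on the sequence. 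Sending $\ell \to \infty$ then yields the claim.

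The main (and essentially only) thing to double-check is that the selection of splitting points $\{x^\ell_j\}$ carried out in Step~1 of the proof of Theorem~\ref{thm:convergence theorem} can be performed so that pointwise convergence $u_i^*(x^\ell_j) \to u^*(x^\ell_j)$ holds for the full sequence. This is guaranteed because the hypothesis provides pointwise convergence on a set of full $\mathcal L^1$-measure in $\R$, and the excluded set $J_u \cup \bigcup_{i\in\N} J_{u_i}$ is at most countable; hence the set of admissible points from which one picks the $x^\ell_j$ is dense in $\R$, and no subsequence extraction enters the argument. This is precisely why the one-dimensional statement holds without passage to a subsequence, in contrast to Theorem~\ref{thm:main BV theorem}.
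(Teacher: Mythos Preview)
Your proposal is correct and follows essentially the same route as the paper: apply estimate~\eqref{eq:convergence in case n=1} from Step~1 of the proof of Theorem~\ref{thm:convergence theorem} with $\nu_0 = |D^d w|$, take $\limsup_{i\to\infty}$, then let $\ell\to\infty$. Your additional verification that $|D^d w|$ is non-atomic and your remark on why no subsequence is needed are welcome clarifications that the paper leaves implicit.
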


\begin{proof}
Equation~\eqref{eq:convergence in case n=1} with $\nu_0=|D^d w|$ gives
\begin{align*}
&\limsup_{i\to\infty}\int_{\R}|u_i^*-u^*|\,d |D^d w|\\
&\qquad \le \limsup_{i\to\infty}\big(|D^d w|(\R)\alpha^{\ell}_i+(2/3)^{\ell} |D^d w|(\R)(|D u_i|(\R)+|D u|(\R))\big)\\
&\qquad = (2/3)^{\ell} |D^d w|(\R)\limsup_{i\to\infty}\big(|D u_i|(\R)+|D u|(\R)\big),
\end{align*}
which becomes arbitrarily small as $\ell\to\infty$.
\end{proof}

\begin{example}
In general dimensions $n \ge 2$ we cannot have
\[
\int_{\R^n}|u_i^*-u^*|\,d|D^d u|\to 0  \quad \text{as } i\to\infty,
\]
even if $u_i\to u$ strongly in $\BV(\R^n)$, since~$u^*$ need not be integrable with respect 
to~$|D^d u|$. This can be seen by considering $u(x)\coloneqq \eta(x)|x|^{-1/2}$ in~$\R^2$, 
where~$\eta$ is a smooth function with $\mathbbm{1}_{B(0,1)} \leq \eta \leq \mathbbm{1}_{B(0,2)}$.
Then for $u_i\coloneqq \min\{u,i\}$, $i\in\N$, we have $u_i\to u$ in $\BV(\R^2)$ but
\[
\int_{\R^2}|u_i^*-u^*|\,d|D^d u|
= \pi \int_0^{i^{-2}}(r^{-1/2}-i)r^{-1/2}\,dr=\infty
\]
for all $i\in\N$.
\end{example}

\begin{example}\label{ex:liftings}
The motivation for this paper arose from the theory of \emph{liftings},
see~\cite{RiSh}. Let $\Om\subset \R^n$ be an open, bounded set. A lifting is a measure
$\gamma\in \mathcal M(\Om\times \R^N;\R^{N\times n})$ for which there exists
a function $u\in \BV(\Om;\R^N)$ with integral average $0$ and such that
the chain rule formula
\begin{align*}
&\int_{\Om}\nabla_x \varphi(x,u(x))\,d\mathcal L^n(x)\\
&\qquad +\int_{\Om\times \R^N}\nabla_\yn\varphi(x,\yn)\,d\mathcal \gamma(x,\yn)
=0\quad\textrm{for all }\varphi\in C_0^1(\Om\times \R^N)
\end{align*}
holds. Here $u=:[\gamma]$ can be shown to be unique.
Given a function $u\in \BV(\Om;\R^N)$ with integral average $0$,
an \emph{elementary lifting} $\gamma[u]$ is defined by
\[
\langle \varphi,\gamma[u] \rangle
\coloneqq \int_{\Om}\int_0^1 \varphi(x,u^\theta(x))\,d\theta\,dDu(x)
\quad\textrm{for all }\varphi\in C_0^1(\Om\times \R^N),
\]
where $u^{\theta}(x)\coloneqq \widetilde{u}(x)$ for $x\in \R^n\setminus S_u$ and
\[
u^{\theta}(x)\coloneqq \theta u^{-}(x)+(1-\theta)u^+(x)\quad\textrm{for }x\in J_u.
\]
The family of \emph{approximable liftings} is then defined as the weak*
limits in $\mathcal M(\Om\times \R^N;\R^{N\times n})$ of sequences of elementary 
liftings. Thus, for an approximable lifting~$\gamma$ we have a sequence 
$\{u_i\}_{i \in \N}$ in $\BV(\Om;\R^N)$ with integral averages $0$ and  
$\gamma[u_i]\overset{*}{\rightharpoonup}\gamma$ in 
$\mathcal M(\Om\times \R^N;\R^{N\times n})$, and then we also have
$u_i\overset{*}{\rightharpoonup} u\coloneqq [\gamma]$ in $\BV(\Om;\R^N)$.
Therefore, it is of interest to better understand weak* convergence in the
BV space, and we expect that the results of the current paper may
be of use in further research on (approximable) liftings.
In particular, Theorem \ref{thm:main BV theorem} may be of help
in investigating the decomposition of approximable liftings into mutually 
singular measures, which are related to the measures $D^a u$, $D^c u$, $D^s u$;
see \cite[Theorem 3.11]{RiSh} for an existing structure theorem.
\end{example}

\section{Two special cases}\label{sec:decreasing}

In this section we show that we can obtain pointwise convergence outside
of an exceptional set with $\sigma$-finite $\mathcal H^{n-1}$-measure 
in two special cases: when $n=1$, and when $\{u_i\}_{i \in \N}$ is a 
decreasing sequence. We start with the first case.

\begin{proposition}\label{prop:1d proof}
Let $u\in \BV(\R)$. Let $\{u_i\}_{i \in \N}$ be a sequence in $\BV(\R)$ 
for which $\{|D u_i|(\R)\}_{i \in \N}$ is
bounded, and
suppose that $u_i^*(x)\to u^*(x)$ for $\mathcal L^1$-almost every $x\in \R$. 
Then there exists an at most countable set $E \subset \R$ such that for a 
subsequence \textup{(}not relabeled\textup{)} we have
$u_i^*(x)\to u^*(x)$ for every $x\in \R\setminus E$.
\end{proposition}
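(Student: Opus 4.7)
The plan is to use the one-dimensional fundamental theorem of calculus for BV functions, namely \eqref{eq:fundamental theorem of calculus for BV}, to convert the pointwise convergence question into a question about convergence of the derivative measures evaluated on intervals whose endpoints are ``good''.

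First I would apply Proposition~\ref{prop:pointwise to L1 convergence} to obtain $u_i \to u$ in $L^1_{\loc}(\R)$, which together with the bound on $\{|Du_i|(\R)\}_{i \in \N}$ forces the weak$^*$ convergence $Du_i \overset{*}{\rightharpoondown} Du$ in $\mathcal M(\R)$. Next, by Banach--Alaoglu, I would pass to a (not relabeled) subsequence with $|Du_i| \overset{*}{\rightharpoondown} \lambda$ in $\mathcal M^+(\R)$ for some finite positive Radon measure $\lambda$; lower semicontinuity of the total variation then gives $|Du| \le \lambda$. In particular, the atoms of $|Du|$ (which in one dimension coincide with the jump points $J_u$) are contained in the atoms of $\lambda$.

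I would define the exceptional set by
\[
 E \coloneqq \bigcup_{i \in \N} J_{u_i} \cup \bigl\{ x \in \R : \lambda(\{x\}) > 0 \bigr\}.
\]
Each $J_{u_i}$ is at most countable since $u_i \in \BV(\R)$, and the atoms of the finite measure $\lambda$ form an at most countable set, so $E$ is at most countable; note also $J_u \subset E$ by the preceding remark. Because $E$ is countable while $u_i^* \to u^*$ $\mathcal L^1$-a.e., the set
\[
 D \coloneqq \bigl\{ x_0 \in \R \setminus E : u_i^*(x_0) \to u^*(x_0) \bigr\}
\]
is dense in $\R$. Given $x \in \R \setminus E$, I pick some $x_0 \in D$ with $x_0 < x$. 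Since both points lie outside $J_u \cup \bigcup_i J_{u_i}$, \eqref{eq:fundamental theorem of calculus for BV} yields
\[
 u_i^*(x) - u^*(x) = \bigl( u_i^*(x_0) - u^*(x_0) \bigr) + \bigl( Du_i((x_0, x)) - Du((x_0, x)) \bigr).
\]
The first bracket tends to $0$ by the choice of $x_0$; the second tends to $0$ since $\partial(x_0, x) = \{x_0, x\}$ is $\lambda$-null while $Du_i \overset{*}{\rightharpoondown} Du$ and $|Du_i| \overset{*}{\rightharpoondown} \lambda$. Hence $u_i^*(x) \to u^*(x)$ for every $x \in \R \setminus E$.

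The main obstacle is the last passage to the limit: weak$^*$ convergence of signed Radon measures does not in general permit evaluation on arbitrary intervals. I would handle it by a Portmanteau-type argument, sandwiching $\mathbbm{1}_{(x_0, x)}$ between continuous compactly supported test functions whose discrepancies are supported near $\{x_0, x\}$ and carry arbitrarily small $\lambda$-mass; the extra extraction $|Du_i| \overset{*}{\rightharpoondown} \lambda$ together with the requirement that the endpoints avoid the countable atoms of $\lambda$ is precisely what makes this work.
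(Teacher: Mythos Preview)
Your argument is correct and in fact cleaner than the paper's. Both proofs begin the same way---extract a subsequence along which $|Du_i|\overset{*}{\rightharpoondown}\lambda$ (the paper writes $\nu$) and take the countable atom set of $\lambda$ as the core of the exceptional set---but they diverge from there. The paper never invokes the weak* convergence $Du_i\overset{*}{\rightharpoondown}Du$; instead it covers any compact piece of $\R\setminus\{a_1,\dots,a_M\}$ by closed intervals $I_j$ of small $\lambda\mres(\R\setminus E)$-mass, uses the crude bound $|D(u_i-u)|(I_j)\le |Du_i|(I_j)+|Du|(I_j)$ together with $\limsup_i|Du_i|(I_j)\le\lambda(I_j)$, and finishes by letting the covering become fine and $M\to\infty$ (mirroring Step~1 of Theorem~\ref{thm:convergence theorem}). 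Your route is more direct: you exploit $Du_i\overset{*}{\rightharpoondown}Du$ and a Portmanteau argument on a single interval $(x_0,x)$ whose endpoints avoid the atoms of $\lambda$, which immediately gives $Du_i((x_0,x))\to Du((x_0,x))$. The price is minor---your exceptional set also contains $\bigcup_i J_{u_i}$ because \eqref{eq:fundamental theorem of calculus for BV} requires non-jump endpoints, whereas the paper works with \eqref{eq:fundamental theorem of calculus for BV 2} and ends up with only the atoms of $\lambda$---but both sets are countable, so the proposition is unaffected. Your identification of the Portmanteau step as the only delicate point, and the sketched sandwich argument controlling the error by $\lambda$-mass near $\{x_0,x\}$, are exactly right.
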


\begin{remark}
Note that this proposition may appear to be an improvement over Step 1
of the proof of Theorem \ref{thm:convergence theorem}. However,
unlike in Step 1, in this proposition we already need to pass to a subsequence.
Therefore, it is not clear how one would apply this proposition in Step 2, since
the subsequence could be different for each one-dimensional section $(u_i)_\ys$.
\end{remark}

\begin{proof}
Since the sequence of measures $\{Du_i\}_{i \in \N}$ is mass-bounded,
passing to a subsequence (not relabeled) we find a positive finite measure~$\nu$ on~$\R$ 
such that $|Du_i|\overset{*}{\rightharpoonup}\nu$. 
Note that $\nu \ge|Du|$ (see \cite[Proposition 1.62]{AFP}).
Let~$E$ be the set of singletons charged by~$\nu$; then~$E$ is at most countable,
so that we can write
\[
  E=\{a_1,a_2,\ldots\}.
\]
Let $M\in\N$ and define $\alpha_M\coloneqq \sum_{k>M}\nu(a_k)$. Fix an arbitrary compact 
set $K\subset \R\setminus \{a_1,\ldots,a_M\}$ and $\eps>0$. Denote $\mu\coloneqq \nu|_{\R\setminus E}$, 
so that $\mu$ does not charge singletons. By a similar splitting procedure as in Step 1 of the proof 
of Theorem~\ref{thm:convergence theorem}, we find closed intervals 
$I_j\subset \R\setminus \{a_1,\ldots,a_M\}$, $j=1,\ldots,L \in\N$, such that
\[
	K\subset \bigcup_{j=1}^{L} I_j
\]
and
\[
	\mu(I_j)<\eps\quad\textrm{for each }j=1,\ldots,L,
\]
and, denoting by $x_j$ the left end point of the interval~$I_j$, such that 
$x_j\notin J_u\cup\bigcup_{i\in\N}J_{u_i}$ and $|u_i^*(x_j)-u^*(x_j)|\to 0$ as 
$i\to\infty$, for each $j=1,\ldots,L$. 	Then
\[
	\nu(I_j)\le\alpha_M +  \mu(I_j) \le \alpha_M+\eps
\]
for each $j=1,\ldots,L$. By the weak* convergence $|Du_i|\overset{*}{\rightharpoonup}\nu$, we have
\[
	\limsup_{i\to\infty}|Du_i|(I_j)\le \nu(I_j)
\]
for each $j=1,\ldots,L$. Thus, for every $x\in I_j$, 
recalling~\eqref{eq:fundamental theorem of calculus for BV 2} and $|Du| \leq \nu$, we get
\begin{align*}
	\limsup_{i\to\infty}|u_i^*(x)-u^*(x)|
	&\le \limsup_{i\to\infty} \big(|u_i^*(x_j)-u^*(x_j)|+|D(u_i-u)|(I_j)\big)\\
	&\le \limsup_{i\to\infty}|Du_i|(I_j)+|Du|(I_j)\\
	&\le \nu(I_j)+\nu(I_j)\\
	&\le 2\alpha_M+2\eps.
\end{align*}
Letting $\eps\to 0$ and exhausting the set $\R\setminus \{a_1,\ldots,a_M\}$
with compact subsets $K$, we get
\[
	\limsup_{i\to\infty}|u_i^*(x)-u^*(x)|\le 2\alpha_M
\]
for all $x\in \R\setminus \{a_1,\ldots,a_M\}$, Thus, letting $M\to \infty$, we finally
end up with
\[
	\limsup_{i\to\infty}|u_i^*(x)-u^*(x)|=0 
\]
for all $x\in \R\setminus E$, which completes the proof of the proposition.
\end{proof}

We next examine the second case, where we deal with a decreasing sequence.

\begin{proposition}
Let $u\in \BV(\R^n)$. Let $\{u_i\}_{i \in \N}$ be a decreasing sequence in $\BV(\R^n)$
for which $\{|D u_i|(\R^n)\}_{i \in \N}$ is
bounded, and suppose that
$u_i(x)\to u(x)$ for $\mathcal L^n$-almost every $x\in \R^n$. Then  there exists a set $E \subset \R^n$ such that~$E$ is $\sigma$-finite with respect to~$\mathcal H^{n-1}$ and
$u_i^*(x)\to u^*(x)$ for every $x\in \R^n\setminus E$.
\end{proposition}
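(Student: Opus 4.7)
The plan is to apply the one-dimensional Proposition~\ref{prop:1d proof} on each coordinate slice, combine with the monotonicity of the sequence to pass from subsequence to full-sequence convergence, and then upgrade "countable on each slice" to "$\sigma$-finite with respect to $\mathcal H^{n-1}$"; this last step will be the main obstacle.

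First I would replace $u_i$ by $u_i-u$, which is still a decreasing $\BV$ sequence with $\{|Du_i|(\R^n)\}_{i\in\N}$ bounded, reducing to the case $u\equiv 0$, $u_i\ge 0$, and $u_i\searrow 0$ $\mathcal L^n$-a.e. Set $E_0 \coloneqq S_u\cup \bigcup_{i\in\N} S_{u_i}$; since the approximate discontinuity set of each $\BV$ function is countably $\mathcal H^{n-1}$-rectifiable, $E_0$ is $\sigma$-finite with respect to $\mathcal H^{n-1}$. Outside $E_0$ the precise representatives $u_i^*$ coincide with the Lebesgue representatives $\widetilde u_i$, they form a monotone decreasing sequence in $i$, and their pointwise limit $L(x)\ge 0$ satisfies $L=0$ $\mathcal L^n$-a.e. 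Hence it suffices to prove that $A \coloneqq \{x\in\R^n\setminus E_0: L(x)>0\}$ is $\sigma$-finite with respect to $\mathcal H^{n-1}$, after which the final exceptional set is $E\coloneqq E_0\cup A$.

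For the slicing step I would fix a coordinate direction $e_k$. By the slicing formula~\eqref{eq:slice representation for total variation} and Fatou's lemma,
\[
\int_{\R^{n-1}} \liminf_{i\to\infty} |D(u_i)^k_y|(\R)\, d\mathcal L^{n-1}(y) \le \liminf_{i\to\infty} |Du_i|(\R^n) < \infty,
\]
so for $\mathcal L^{n-1}$-a.e.\ $y$ a $y$-dependent subsequence $(u_{i_j})^k_y$ has uniformly bounded $\BV(\R)$-norm and is still decreasing and convergent $\mathcal L^1$-a.e.\ to $0$. Proposition~\ref{prop:1d proof} applied to this subsequence produces an at most countable set $E^k_y\subset \R$ outside which $((u_{i_j})^k_y)^*(t)\to 0$; by monotonicity the full slice sequence $((u_i)^k_y)^*(t)$ has the same pointwise limit as any of its subsequences, so it converges to $0$ on $\R\setminus E^k_y$ as well. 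Combined with~\eqref{eq:sections and jump sets}, this shows that $A$ has an at most countable intersection with $\mathcal L^{n-1}$-a.e.\ line parallel to $e_k$, for every $k=1,\dots,n$.

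The hard part will be to upgrade this countable-slice information into $\sigma$-finiteness of $A$ with respect to $\mathcal H^{n-1}$, since countable slices alone are insufficient in general (consider, e.g., closures of Weierstrass-type graphs). To exploit the $\BV$ structure, I would decompose $A=\bigcup_{k\in\N} A_k$ with $A_k \coloneqq \{L\ge 1/k\}$, so that $\widetilde u_i\ge 1/k$ on $A_k$ for every $i$. A layer-cake computation, together with the density trichotomy $\{0,\tfrac12,1\}$ which holds $\mathcal H^{n-1}$-a.e.\ for sets of finite perimeter, shows that outside an $\mathcal H^{n-1}$-null set each $x\in A_k$ either lies on $\partial^*\{u_i>1/(2k)\}$ for some $i$, or belongs to $\{u_i>1/(2k)\}^1$ for every $i$. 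Using the coarea identity $\int_0^\infty P(\{u_i>t\})\,dt \le M$ and Fatou, I would select a level $t_k<1/(2k)$ with $\liminf_i P(\{u_i>t_k\})<\infty$, extract a corresponding diagonal subsequence (still admissible by monotonicity), and conclude that the "essential-boundary" portion of $A_k$ lies in $\limsup_j \partial^*\{u_{i_j}>t_k\}$, a countable union of sets of finite $\mathcal H^{n-1}$-measure and therefore $\sigma$-finite. The residual "density-$1$" portion $\bigcap_j \{u_{i_j}>t_k\}^1$ is where I expect the main difficulty to sit; combining the countable-slice information from the previous paragraph with the uniform perimeter bound via a measurable-selection / Hausdorff-convergence argument for the monotonically shrinking sets $\{u_{i_j}>t_k\}$ should yield $\sigma$-finiteness of this last piece. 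Taking a countable union over $k\in\N$ then produces the desired exceptional set $E$.
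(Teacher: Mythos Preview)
Your reduction to $u\equiv 0$ and the decomposition $A=\bigcup_k A_k$ with $A_k=\{L\ge 1/k\}$ are the right first moves, and they match the paper. But from there the proposal has a real gap.

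First, the slicing route via Proposition~\ref{prop:1d proof} is, as you yourself note, a dead end: ``countable intersection with almost every line'' does not imply $\sigma$-finiteness for $\mathcal H^{n-1}$, and nothing in the later paragraphs actually uses the slice information, so this step contributes nothing to the argument.

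Second, your density-trichotomy decomposition of $A_k$ is vacuous. Since you have already removed $E_0\supset\bigcup_i S_{u_i}$, every $x\in A_k$ is a Lebesgue point of each $u_i$ with $\widetilde u_i(x)\ge 1/k$; hence $x$ is a density-$1$ point of $\{u_i>t\}$ for \emph{every} $t<1/k$ and every $i$. Thus the ``essential-boundary portion'' you isolate is empty, and the entire set $A_k$ sits in the ``residual density-$1$ portion'' you admit you do not know how to handle. The proposal therefore stops exactly at the point where the actual work begins.

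The missing idea is a direct covering argument. Fix $k$, use the coarea formula to pick for each $i$ a level $t_i\in(1/k,2/k)$ with $|D\mathbbm{1}_{\{u_i^*>t_i\}}|(\R^n)\le k|Du_i|(\R^n)$, and set $S_i\coloneqq\{u_i^*>t_i\}$, so $\mathcal L^n(S_i)\to 0$ while the perimeters stay bounded. For each $x\in A_k$ (outside an $\mathcal H^{n-1}$-null set) the density of $S_i$ at $x$ is at least $1/2$; since $\mathcal L^n(S_i)\to 0$, continuity in $r$ produces a radius $r_x\le R_i\to 0$ with density exactly $1/3$. The relative isoperimetric inequality then gives $r_x^{n-1}\le C\,|D\mathbbm{1}_{S_i}|(B(x,r_x))$, and a Vitali $5$-covering of $A_k$ by such balls yields $\mathcal H^{n-1}_{10R_i}(A_k)\le C\,|D\mathbbm{1}_{S_i}|(\R^n)$. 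Letting $i\to\infty$ gives $\mathcal H^{n-1}(A_k)<\infty$, and the union over $k$ finishes. This is what replaces your undeveloped ``measurable-selection / Hausdorff-convergence argument''.
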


\begin{proof}
By Proposition \ref{prop:pointwise to L1 convergence}, we have $u_i\to u$ in $L_{\loc}^1(\R^n)$, and we can in fact assume 
$u_i\to u$ in $L^1(\R^n)$  (otherwise, we multiply the sequence by cut-off functions $\eta_R$ with 
$\mathbbm{1}_{B(0,R/2)} \leq \eta_R \leq \mathbbm{1}_{B(0,R)}$ for $R \in \N$).

First we assume that $u\equiv 0$. Let
\[
	P\coloneqq \bigcup_{i \in \N}\big(S_{u_i}\setminus J_{u_i}\big),
\]
so that $\mathcal H^{n-1}(P)=0$, and let $E\subset \R^n$ be the set where the convergence
$u_i^*\to u^*$ fails.
Then $E=\bigcup_{k=1}^{\infty}E_k$ with
\[
	E_k\coloneqq \{x\in \R^n \colon \lim_{i\to\infty}u^*_i(x)\ge 2/k\}.
\]
Fix $k\in\N$. By the coarea formula~\eqref{eq_coarea}, we have for every $i\in\N$
\begin{equation}\label{eq:using coarea}
	\int_{1/k}^{2/k}|D\mathbbm{1}_{\{u_i>t\}}|(\R^n)\,dt
	\le \int_{-\infty}^{\infty}|D\mathbbm{1}_{\{u_i>t\}}|(\R^n)\,dt=|Du_i|(\R^n).
\end{equation}
Thus, for every $i\in\N$ we can choose 	$t_i\in (1/k,2/k)$ such that
\[
	|D\mathbbm{1}_{\{u_i^*>t_i\}}|(\R^n)\le k|Du_i|(\R^n).
\]
Now, setting $S_i\coloneqq \{x\in\R^n \colon u_i^*(x)> t_i\}$, we have 
$\mathbbm{1}_{S_i}\to 0$ in $L^1(\R^n)$ and
\begin{equation}\label{eq:Ei perimeter finite limsup}
	\limsup_{i\to\infty}|D\mathbbm{1}_{S_i}|(\R^n)
	\le k\limsup_{i\to\infty}|Du_i|(\R^n)<\infty.
\end{equation}
Fix $i\in\N$ and let $x\in E_k\setminus P$. Then, by the fact that $\{u_i\}_{i \in \N}$ is 
a decreasing sequence, we have $x\in S_i$. Using the fact that~$x$ is either a Lebesgue or 
a jump point for~$u_i$ by definition of the set~$P$, we can further verify 
\[
	\lim_{r\to 0}\frac{\mathcal L^n(B(x,r)\cap S_i)}{\mathcal L^n(B(x,r))}
	\geq \frac{1}{2}.
\]
Setting	$R_i\coloneqq \omega_n^{-1/n}(3\mathcal L^n(S_i))^{1/n}$,

for all $r\ge R_i$ we have 
\[
	\frac{\mathcal L^n(B(x,r)\cap S_i)}{\mathcal L^n(B(x,r))}\le \frac{1}{3}.
\]
Thus, by continuity we find $0<r_x\le R_i$ such that
\[
	\frac{\mathcal L^n(B(x,r_x)\cap S_i)}{\mathcal L^n(B(x,r_x))}=\frac{1}{3}.
\]
By the relative isoperimetric inequality (with constant~$C_P$ depending only on~$n$), 
cp. \cite[Remark 3.45]{AFP}, we have
\begin{equation}\label{eq:rel isop ineq}
	\frac{\mathcal L^n(B(x,r_x))}{r_x}\le 3C_P |D \mathbbm{1}_{S_i}|(B(x,r_x)).
\end{equation}
The collection $\{B(x,r_x)\}_{x\in E_k\setminus P}$ is a covering of $E_k\setminus P$.
By the Vitali $5$-covering theorem, we then find a countable collection of disjoint balls
$\{B(x_j,r_j)\}_{j \in \N}$ such that the balls
$\{B(x_j,5r_j)\}_{j \in \N}$ cover $E_k\setminus P$. Thus,
using~\eqref{eq:rel isop ineq}, we find for the $(n-1)$-dimensional Hausdorff pre-measure 
of fineness $10R_i$ of the set $E_k \setminus P$  the estimate 
\begin{align*}
	\mathcal H^{n-1}_{10R_i}(E_k\setminus P)\le
	\omega_{n-1} \sum_{j \in \N} (5r_j)^{n-1}
	&\le 3 C_P 5^{n-1} \frac{\omega_{n-1}}{\omega_n}\sum_{j \in \N} |D\mathbbm{1}_{S_i}|(B(x_j,r_j))\\
	&\le C(n) |D\mathbbm{1}_{S_i}|(\R^n).
\end{align*}
Letting $i\to\infty$, so that also $R_i\to 0$, by~\eqref{eq:Ei perimeter finite limsup} we get
\[
	\mathcal H^{n-1}(E_k\setminus P)
	\le C(n)\limsup_{i\to\infty}|D\mathbbm{1}_{S_i}|(\R^n)<\infty.
\]
Since this holds for each $k \in \N$, we obtain that the set 
$E\setminus P=\bigcup_{k \in \N} E_k\setminus P$
is $\sigma$-finite with respect to~$\mathcal H^{n-1}$, and then so is~$E$.
Note that for every $x\in \R^n\setminus E_k$, we have $\lim_{i\to\infty}u^*_i(x)< 2/k$.
Thus, for every $x\in \R^n\setminus E$, we have $\lim_{i\to\infty}u^*_i(x)=0=u^*(x)$, 
completing the proof in the case $u\equiv 0$.
	
In the general case, $\{u_i-u\}_{i \in \N}$ is a decreasing sequence in $\BV(\R^n)$ 
for which $\{|D(u_i-u)|(\R^n)\}_{i \in \N}$ is bounded, and $u_i-u\to 0$ holds 
$\mathcal L^n$-almost everywhere. By the first part, we have $(u_i-u)^*\to 0$ outside 
of a set that is $\sigma$-finite with respect to~$\mathcal H^{n-1}$.
Note that outside of the $\mathcal H^{n-1}$-negligible set
\[
	(S_{u}\setminus J_{u})\cup
	\bigcup_{i \in \N} \big(S_{u_i}\setminus J_{u_i}\big)
\]
we have $(u_i-u)^*=u_i^*-u^*$. Therefore, the assertion of the proposition follows. 
\end{proof}

\paragraph{{\bf Acknowledgments.}}The authors wish to thank Giles Shaw for
posing the question that led to this research and for discussions on
the topic, and also Jan Kristensen and Bernd Schmidt for discussions.


\begin{thebibliography}{100}
\bibitem{ADAMS75}R. A. Adams,
\emph{Sobolev spaces},
Academic Press, New York, 1975.

\bibitem{AFP}L. Ambrosio, N. Fusco, and D. Pallara,
\emph{Functions of bounded variation and free discontinuity problems.},
Oxford Mathematical Monographs. The Clarendon Press, Oxford University Press, New York, 2000.

\bibitem{BEC16}L. Beck,
\emph{Elliptic regularity theory - a first course}, 
Lecture Notes of the Unione Matematica Italiana 19, Springer, 2016.

\bibitem{BRALINPAR14}L. Brasco, E. Lindgren, and E. Parini, 
\emph{The fractional Cheeger problem},
Interfaces Free Bound. 16 (2014), 419--458.

\bibitem{BRASAL19}L. Brasco and A. Salort,
\emph{A note on homogeneous Sobolev spaces of fractional order},
Ann. Mat. Pura Appl. 198 (2019), 1295--1330.

\bibitem{DINPALVAL12}
E. Di Nezza, G. Palatucci, and E. Valdinoci, 
\textit{Hitchhiker's guide to the fractional Sobolev spaces},
Bull. Sci. Math. 136 (2012), no. 5, 521--573.

\bibitem{EvGa}L. C. Evans and R. F. Gariepy,
\textit{Measure theory and fine properties of functions},
Studies in Advanced Mathematics series, CRC Press, Boca Raton, 1992.

\bibitem{Fed}H. Federer,
\textit{Geometric measure theory},
Die Grundlehren der mathematischen Wissenschaften, Band 153 Springer-Verlag New York Inc., New York 1969 xiv+676 pp.

\bibitem{FEDZIE72}H. Federer and W. P. Ziemer, 
\textit{The Lebesgue set of a function whose distribution derivatives are $p$-th power summable}, 
Indiana Univ. Math. J. 22 (1972/73), 139--158. 

\bibitem{KKLS}J. Kinnunen, R. Korte, A. Lorent, and N. Shanmugalingam,
\textit{Regularity of sets with quasiminimal boundary surfaces in metric spaces},
J. Geom. Anal. 23 (2013), no. 4, 1607--1640. 


\bibitem{LaSh}P. Lahti and N. Shanmugalingam,
\textit{Fine properties and a notion of quasicontinuity for $\BV$ functions on metric spaces},
 J. Math. Pures Appl. (9) 107 (2017), no. 2, 150--182.

\bibitem{LUIVAE17}H. Luiro and  A. V. V\"ah\"akangas,
\textit{Beyond local maximal operators},
Potential Analysis 46 (2017), 201--226.

\bibitem{MALZIE97}J. Mal{\'y} and W. P. Ziemer, 
\textit{Fine regularity of solutions of elliptic partial differential equations},
Mathematical Surveys and Monographs 51, American Mathematical Society, Providence, RI, 1997.
 
\bibitem{RiSh}F. Rindler and G. Shaw,
\textit{Liftings, Young measures, and lower semicontinuity},
Arch. Ration. Mech. Anal. 232 (2019), no. 3, 1227--1328. 

\bibitem{Zie}W. P. Ziemer,
\textit{Weakly differentiable functions. Sobolev spaces and functions of bounded variation},
Graduate Texts in Mathematics, 120. Springer-Verlag, New York, 1989.

\end{thebibliography}
\end{document}